\newcommand{\includesvg}[2][scale=1]{\includegraphics[#1]{#2.pdf}}
\newtheorem{theorem}{Theorem}
\newtheorem{corollary}{Corollary}
\newtheorem{proposition}{Proposition}
\theoremstyle{definition}
\newtheoremstyle{assume}
  {3pt}% measure of space to leave above the theorem. e.g.: 3pt
  {3pt}% measure of space to leave below the theorem. e.g.: 3pt
  {}% name of font to use in the body of the theorem
  {}% measure of space to indent
  {\bf}% name of head font
  {}% punctuation between head and body
  { }% space after theorem head; " " = normal interword space
  {\thmname{#1}.\thmnumber{#2}\thmnote{ \textnormal{(\textit{#3})}}}% Manually specify head
\theoremstyle{assume}
\newtheorem{assumption}{A}
\DeclareMathOperator{\E}{\mathbb{E}}
\DeclareMathOperator{\trace}{Tr}
\DeclareMathOperator*{\argmin}{argmin}
\newcommand{\norm}[1]{\ensuremath{\left\| #1 \right\|}}
\newcommand{\abs}[1]{\ensuremath{{\left\vert #1 \right\vert}}}
\newcommand{\bzero}{\ensuremath{\bm{0}}}
\newcommand{\bA}{\ensuremath{\bm{A}}}
\newcommand{\bH}{\ensuremath{\bm{H}}}
\newcommand{\bI}{\ensuremath{\bm{I}}}
\newcommand{\bK}{\ensuremath{\bm{K}}}
\newcommand{\bQ}{\ensuremath{\bm{Q}}}
\newcommand{\bR}{\ensuremath{\bm{R}}}
\newcommand{\bU}{\ensuremath{\bm{U}}}
\newcommand{\bX}{\ensuremath{\bm{X}}}
\newcommand{\bp}{\ensuremath{\bm{p}}}
\newcommand{\bq}{\ensuremath{\bm{q}}}
\newcommand{\bu}{\ensuremath{\bm{u}}}
\newcommand{\bw}{\ensuremath{\bm{w}}}
\newcommand{\bx}{\ensuremath{\bm{x}}}
\newcommand{\bLambda}{\ensuremath{\bm{\Lambda}}}
\newcommand{\setN}{\ensuremath{\mathbb{N}}}
\newcommand{\setR}{\ensuremath{\mathbb{R}}}
\def\st/{\textsuperscript{st}}
\def\nd/{\textsuperscript{nd}}
\def\rd/{\textsuperscript{rd}}
\def\th/{\textsuperscript{th}}
\newcommand{\eps}{\ensuremath{\epsilon}}
\newcommand{\del}{\ensuremath{\partial}}
\newcommand{\tilw}{\ensuremath{\widetilde{\bw}}}
\newcommand{\mubar}{\ensuremath{\bar{\mu}}}
\newcommand{\fbar}{\ensuremath{\bar{f}}}
\newcommand{\fpbar}{\ensuremath{\bar{f}^{\prime}}}
\newcommand{\fppbar}{\ensuremath{\bar{f}^{\prime\prime}}}
\newcommand{\etabar}{\ensuremath{\bar{\eta}}}
\newcommand{\abar}{\ensuremath{\bar{a}}}
\newcommand{\ubar}{\ensuremath{\bm{{\hat{\bu}}}}}
\newcommand{\Kbar}{\ensuremath{\bm{{\hat{K}}}}}
\newcommand{\squishlist}{
	\begin{list}{$\bullet$}
	{ \setlength{\itemsep}{0pt}
      \setlength{\parsep}{3pt}
      \setlength{\topsep}{3pt}
      \setlength{\partopsep}{0pt}
      \setlength{\leftmargin}{1.5em}
      \setlength{\labelwidth}{1em}
      \setlength{\labelsep}{0.5em} } }
\newcommand{\squishend}{
	\end{list}  }
\title{Combination of LMS Adaptive Filters\\with Coefficients Feedback}
\author{Luiz~F.~O.~Chamon,~\IEEEmembership{Student Member,~IEEE,}
        and~Cássio~G.~Lopes,~\IEEEmembership{Senior Member,~IEEE}%
\thanks{Signal Processing Lab of the Department of Electronic Systems Engineering, University of São Paulo, Brazil. e-mail:~\mbox{\texttt{chamon@usp.br}} and \mbox{\texttt{cassio@lps.usp.br}}. Part of the results in this paper appeared in~[19].}}
\begin{document}
\maketitle
\begin{abstract}
Parallel combinations of adaptive filters have been effectively used to improve the performance of adaptive algorithms and address well-known trade-offs, such as convergence rate vs{.} steady-state error. Nevertheless, typical combinations suffer from a convergence stagnation issue due to the fact that the component filters run independently. Solutions to this issue usually involve conditional transfers of coefficients between filters, which although effective, are hard to generalize to combinations with more filters or when there is no clearly faster adaptive filter. In this work, a more natural solution is proposed by cyclically feeding back the combined coefficient vector to all component filters. Besides coping with convergence stagnation, this new topology improves tracking and supervisor stability, and bridges an important conceptual gap between combinations of adaptive filters and variable step size schemes. We analyze the steady-state, tracking, and transient performance of this topology for LMS component filters and supervisors with generic activation functions. Numerical examples are used to illustrate how coefficients feedback can improve the performance of parallel combinations at a small computational overhead.
\end{abstract}
\begin{IEEEkeywords}
Adaptive filters, combination of adaptive filters, coefficients feedback, affine combination, convex combination.
\end{IEEEkeywords}

% For peerreview, includes further information and page break after title
\ifCLASSOPTIONpeerreview
	\begin{center}
		\bfseries EDICS Category: X-XXXX
	\end{center}
\fi
\IEEEpeerreviewmaketitle

\section{Introduction}
% Single letter first word:
% \IEEEPARstart{A}{demo}

\IEEEPARstart{A}{daptive} filters~(AFs) are widely used in signal processing due to their tracking capabilities and low computational complexity. Still, they display performance trade-offs that can hinder their use in practice, such as the compromise between convergence rate or tracking and steady-state error~\cite{Sayed08a, Diniz13a}. These issues have been mitigated by modifying the filter cost functions, e.g., mixed-norm AFs, or by using variable step size~(VSS) techniques~\cite{Harris86v, Kwong92v, Mathews93s, Mayyas95r, Shin04v}. More recently, \emph{combinations of AFs} were introduced to address these trade-offs, especially in situation where the design of a single filter is intricate~\cite{Singer99u, Kozat02f, Jeronimo06m, Chambers06c, Bershad08a, Magno08i, Cassio07e, Jeronimo08a, Cassio10a, Azpicueta11a, Wilder11i, Chamon12c, Chamon12d, Vitor12l, Ferro14f}. In this approach, a pool of AFs is combined by an adaptive supervisor such that the overall system performs at least as well as the best filter in the pool, usually in the mean-square error~(MSE) sense. Typically, convex or affine supervisors are used, possibly in their normalized forms~\cite{Jeronimo06m, Bershad08a, Azpicueta08n, Candido10t}.

The most common combination structure runs each AF independently and then merges their outputs~(Fig.~\ref{F:ParallelTopology}a). We dub this structure \emph{parallel-independent}. It has been studied for different adaptive algorithms, step sizes, orders, and supervisors~\cite{Singer99u, Kozat02f, Jeronimo06m, Chambers06c, Cassio10a, Cassio07e, Bershad08a, Magno08i, Jeronimo08a}. Although effective, parallel-independent combinations display a well-known convergence stagnation regardless of the supervisor. To understand this phenomenon, consider a combination of two AFs, one fast and one slow but accurate~(Fig.~\ref{F:CyclicComponents}a). Once the faster AF has converged, the output of the combination plateaus while the slower filter does not reach a lower error level. When this happens, the combination ``switches'' filters and continues to converge.

Structural changes have been proposed to address this issue. In~\cite{Jeronimo06m}, \emph{conditional coefficients leakage} from faster to more accurate AF was introduced~(Fig.~\ref{F:ParallelTopology}b). This approach modifies the recursion of the slower AF so that its coefficient vector becomes a mixture of those from both component filters. A similar idea was explored in~\cite{Vitor12l} where the coefficients of the slower filter were conditionally replaced by those from the faster one. A different approach entirely reformulated the topology leading to \emph{incremental combinations}~\cite{Wilder11i, Chamon12d}.

A more natural solution that retains the parallel form of the combination was put forward in~\cite{Chamon12c}: \emph{cyclic coefficients feedback}~(Fig.~\ref{F:ParallelTopology}c). This structure periodically feeds back the overall coefficients of the combination to all component filters, improving their performance regardless of which AF is better at each iteration. The cyclic nature of these feedbacks is the key to exploit the output of the combination without hindering the supervisor adaptation. This topology not only addresses the convergence stagnation issue, but also improves tracking performance. In this work, we analyze this structure by

\begin{itemize}

	\item showing that VSS adaptive algorithms can be interpreted as combinations with coefficients feedback;

	\item analyzing the steady-state, tracking, and transient performance of parallel combinations of LMS filters with coefficients feedback;

	\item extending the supervisor transient model from~\cite{Vitor09t} to general activation functions;

	\item using these analyses to show that besides eliminating the stagnation issue, coefficients feedbacks increase cooperation among filters, improve tracking, and stabilize the supervising parameters;

	\item illustrating the performance of this new topology in numerical examples.

\end{itemize}

\textbf{Notation}: Lowercase boldface letters represent vectors~($\bx$) and uppercase boldface letters are used for matrices~($\bX$). Iteration indices are shown as subscripts on vectors~($\bx_i$) and in parenthesis for scalars~[$x(i)$]. We denote the steady-state value of any variable by omitting its iteration index, i.e., $x = \lim_{i\to \infty} x(i)$.

\begin{figure}[t]
	\centering
	\includesvg[width=\columnwidth]{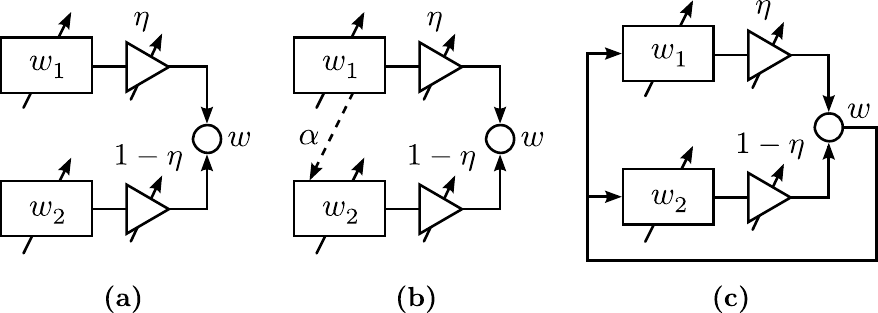}
%	\vspace*{-12pt}
	\caption{Parallel topologies:
				(a)~Parallel-independent;
				(b)~Parallel with transfers of coefficients---leakage or handover;
				(c)~Parallel with coefficients feedback.}
		\label{F:ParallelTopology}
\end{figure}

\section{Combinations of Adaptive Filters}
	\label{S:AFComb}

A combination of AFs is composed of three parts: the component filters, the topology, and the supervisor. \emph{Component filters} are the building blocks of combinations: they are the standalone AFs that are merged to improve their individual performances. The manner in which these AFs are merged is called the \emph{topology}. It defines how the filters interact and how the output of the combination is obtained from the output of each AF. Oftentimes this topology depends on a set of parameters that modifies the combination behavior. These \emph{supervising parameters} are adapted by the \emph{supervisor}. In the sequel, we examine each of these elements individually, starting with the component filters.

\subsection{The component filters}

In a combination of~$N$ AFs, each component filter is distinguished by using the index~$n = 1,\dots,N$. These AFs update \emph{a priori} coefficient vectors~$\bw_{n,a} \in \setR^M$ in an attempt to minimize some underlying cost function~$J_n(\bw_{n,a})$. This cost function is usually the MSE~$J_n(\bw_{n,a}) = \E \left[ d(i) - \bu_i^T \bw_{n,a} \right]^2$, where~$\bu_{i}$ is a~$M \times 1$ real-valued regressor vector and~$d(i) \in \setR$ is the desired signal. For example, in a combination of LMS filters, the recursion of the~$n$-th AF is written as
\begin{equation}\label{E:LMS}
	\bw_{n,i} = \bw_{n,a} + \mu_n \bu_{i}
		\left[ d(i) - \bu_i^{T} \bw_{n,a} \right]
		\text{,}
\end{equation}
where $\mu_n > 0$ is a step size. Using the \emph{a priori} coefficients~$\bw_{n,a}$ in~\eqref{E:LMS} we can formalize the notion of combination topology.

\begin{figure}[t]
	\centering
	\includesvg{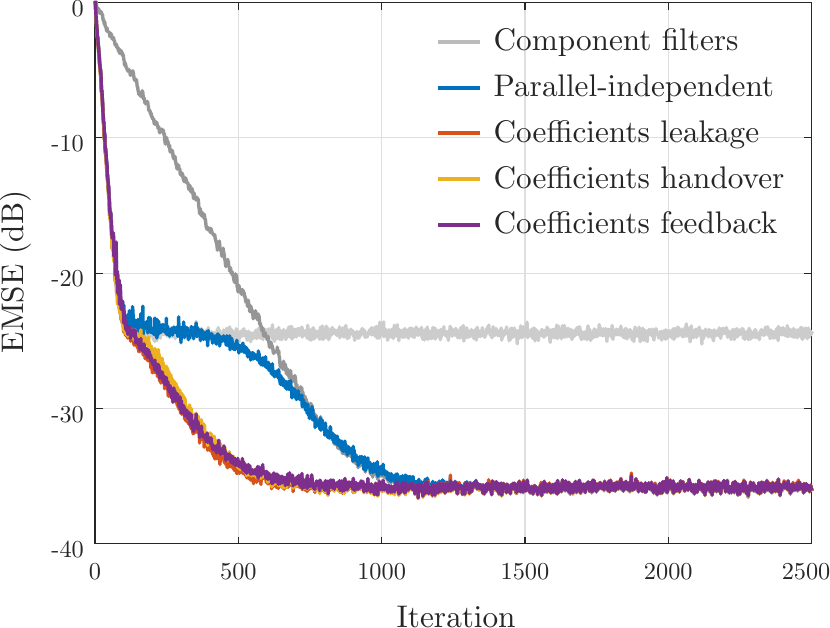}
	\caption{Effect of different topologies on the supervisor performance
	(convex supervisor). \textbf{White stationary scenario}:
		$M = 10$, $\sigma_u^2 = 1$, $\sigma_v^2 = 10^{-2}$,
		$\mu_1 = 0.05$, $\mu_2 = 0.005$;
	\textbf{Parallel-independent}: $\mu_{a} = 200$;
	\textbf{Coefficients leakage}: $\mu_{a} = 450$ and~$\alpha = 0.6$
		for~$\eta \geq 0.98$ and~$\alpha = 0$ otherwise;
	\textbf{Coefficients handover}: $L = 10$ and~$\mu_{a} = 200$;
	\textbf{Cyclic coefficients feedback}: $L = 90$ and~$\mu_a = 270$.}
		\label{F:TopologySupervisor}
\end{figure}

\begin{figure*}[t]
	\centering
	\includesvg{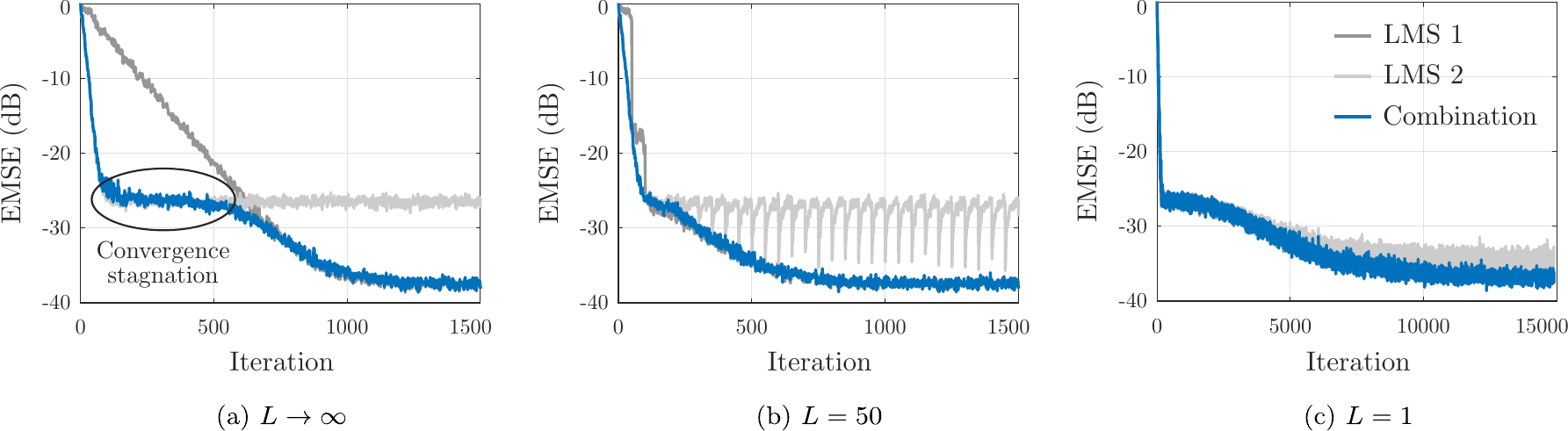}
	\vspace*{-10pt}
	\caption{Combination of LMS filters with cyclic coefficients feedback. 	\textbf{White stationary scenario}~(see Section~\ref{S:Sims}):
		$M = 7$, $\sigma_u^2 = 1$, $\sigma_v^2 = 10^{-2}$, 
		$\mu_1 = 0.05$, and $\mu_2 = 0.005$.
	(a)~\textbf{Parallel-independent}~($L \to \infty$):
		$\mu_a = 200$~(convex supervisor);
	(b)~\textbf{Cyclic coefficients feedback}: $L = 50$
		and~$\mu_a = 200$~(convex supervisor);
	(c)~\textbf{Coefficients feedback}: $L = 1$
		and~$\mu_a = 300$~(convex supervisor).}
		\label{F:CyclicComponents}
\end{figure*}

\subsection{The topology}
	\label{S:topology}

Notice from~\eqref{E:LMS} that AFs have 3 inputs: the regressor~$\bu_i$, the desired signal~$d(i)$, and the \emph{a priori} coefficient vector~$\bw_{n,a}$. Filters in a combination interact using the latter. In fact, the topology of a combination is formally defined by specifying the~$\bw_{n,a}$ and a function that maps the component filters to the output of the combination. In this work, we focus on parallel topologies, i.e., those for which the update~\eqref{E:LMS} can be evaluated for all~$n$ simultaneously. Formally, a parallel topology is one in which the~$\{ \bw_{n,a} \}$ at iteration~$i$ are functions only of the~$\{ \bw_{n,j} \}$ for~$j < i$. This is in contrast to incremental or series topologies, in which the update of a component filter may depend on the state of another AF at the current iteration~\cite{Wilder11i}. We define the output of parallel combinations as
\begin{equation}\label{E:Comb}
	\bw_{i} = \sum_{n=1}^{N} \eta_{n}(i) \bw_{n,i}
		\text{,}
\end{equation}
where~$\bw_{i}$ is the \emph{global coefficient vector} and~$\{ \eta_n(i) \}$ are the supervising parameters.

The most common parallel topology is the \emph{parallel-independent}, where~$\bw_{n,a} = \bw_{n,i-1}$~(Fig.~\ref{F:ParallelTopology}a). In other words, the component filters run independently and their cooperation arises solely at the output of the combination~\eqref{E:Comb}~\cite{Jeronimo06m, Bershad08a, Magno08i, Vitor09t, Kozat10u, Kozat11t}. This structure suffers from a well-known convergence stagnation issue illustrated in Fig.~\ref{F:CyclicComponents}a. Notice that the overall combination appears to stall as the faster filter approaches steady-state until the slower filter output error catches up. Although this effect is more prominent in stationary scenarios, it occurs for all common supervisors and suggests that structural changes are required to overcome it.

In~\cite{Jeronimo06m}, convergence stagnation was addressed using a conditional \emph{coefficients leakage} scheme~(Fig.~\ref{F:ParallelTopology}b). Assuming filter~$2$ is slower and more accurate, this topology adapts filter~$1$ independently, i.e., $\bw_{1,a} = \bw_{1,i-1}$, and changes the second filter recursion to
\begin{align}\label{E:CoefLeak}
	\bw_{2,i} &= \alpha \bw_{1,i-1} +
		(1 - \alpha) [\bw_{2,i-1} + \mu_{2} \bu_{i} e_{2}(i)]
		\text{,}
\end{align}
where~$e_2(i) = d(i) - \bu_i^{T} \bw_{2,i-1}$, $\alpha = \bar{\alpha}\, \mathbb{I}_{\eta(i) > \eta_\text{th}}$, for constants~$\bar{\alpha} \in [0,1]$ and~$\eta_\text{th} \approx 1$, and~$\mathbb{I}$ denotes the indicator function. It is straightforward to see why the threshold test in the indicator function is paramount: unless~$\alpha$ vanishes, filter~$1$ will perturb the steady-state of filter~$2$ and the overall combination will eventually be worse than the second filter alone.

An alternative transfer of coefficients was proposed~\cite{Vitor12l}. \emph{Coefficients handover} once again assumes that the component filter~$2$ is slower and more accurate and cyclically and conditionally assigns~$\bw_{2,i} = \bw_{1,i}$. Although it uses the same condition as in the coefficients leakage scheme, this solution is more effective in addressing the convergence stagnation issue. Moreover, it can also be seen as a complexity reduction technique since the second component is not updated when handover occurs~\cite{Vitor12l}.

Though effective~(see Fig.~\ref{F:TopologySupervisor}), these methods have downsides from an application point of view. In particular, they are based on unidirectional transfers, i.e., from filter~$1$ to filter~$2$. It may however be hard to guarantee in practice that filter~$1$ will always be faster. This is the case, for instance, in tracking applications or varying signal-to-noise ratio~(SNR). This issue is further accentuated in combinations with~$N > 2$ AFs in which there are~$N(N-1)$ possible transfers to choose from. Moreover, these techniques regularly require conditional tests to check whether transfers should take place, which places an additional burden on the implementation.

Before we show how a topology with coefficients feedback addresses these issues, we explain the final element of combinations, the supervisor.

\subsection{The supervisor}
	\label{S:SupervisingRules}

The supervisor is responsible for adjusting the supervising parameters of the topology, playing a fundamental role in the combination performance. The most common approach is to adapt~$\eta_n(i)$ in~\eqref{E:Comb} so as to minimize the global MSE~$\E e(i)^{2}$ under an unbiasedness constraint, where~$e(i) = d(i) - \bu_i \bw_{i-1}$ is the \emph{global output estimation error}. For parallel topologies, this constraint can be expressed as~\cite{Jeronimo06m, Bershad08a, Kozat10u}
\begin{equation}\label{E:Unbiasedness}
	\sum_{n=1}^{N} \eta_{n}(i) = 1
		\text{,} \quad \text{for all } i \text{.}
\end{equation}
Without loss of generality, the remain of this paper considers the~$N = 2$ case, given that larger combinations can be built from smaller ones using hierarchical combination techniques~\cite{Cassio07e, Jeronimo08a}. Using~\eqref{E:Unbiasedness}, the global coefficient vector in~\eqref{E:Comb} therefore reduces to
\begin{equation}\label{E:2Comb}
	\bw_{i} = \eta(i) \, \bw_{1,i} + \left[ 1-\eta(i) \right] \bw_{2,i}
		\text{.}
\end{equation}

The global MSE minimization is typically carried out using stochastic gradient descent, mirroring the adaptation of standalone AFs such as~\eqref{E:LMS}. Additionally, a strictly increasing activation function~$f$ can be used to reduce the variance of the supervising parameters, in which case the supervisor adapts an internal state~$a$ that determines~$\eta(i)$. This general supervising parameter adaptation procedure can be described as
\begin{subequations}\label{E:SupervisorModel}
\begin{align}
	a(i) &= a(i-1) + \mu_{a} e(i) \left[ y_{1}(i) - y_{2}(i) \right]
		f^{\prime}[a(i-1)]
		\label{E:adaptA}
	\\
	\eta(i) &= f\left[ a(i) \right]
		\label{E:etaModel}
		\text{,}
\end{align}
\end{subequations}
where~$y_n = \bu_i^T \bw_{n,i-1}$ is the output of the $n$-th component, $\mu_a > 0$ is a step size, and~$f^{\prime}$ denotes the derivative of~$f$~\cite{Cassio07e, Cassio10a}. The auxiliary variable~$a$ is usually restricted to some range~$[a^{-},a^{+}]$ by saturating~\eqref{E:adaptA}. Note that the restrictions on~$f$ imply that~$f^{\prime} > 0$ and guarantees that~\eqref{E:adaptA} descends along the gradient and avoids stalling the adaptation algorithm. In other words, it ensures that there is no~$a$ for which~$f^{\prime}(a)$ vanishes and the supervising parameter becomes independent of the component filters.

For appropriate choices of the activation function in~\eqref{E:SupervisorModel} we can recover the two most widely used supervising algorithms. The \emph{affine supervisor} is obtained by letting~$f(a) = a$ in~\eqref{E:SupervisorModel} and reads
\begin{equation}\label{E:AffineSupervisor}
	\eta(i) = \eta(i-1) + \mu_{\eta} e(i)
		\left[ y_{1}(i) - y_{2}(i) \right]
		\text{.}
\end{equation}
The value of~$\eta(i)$ is typically constrained to improve steady-state stability by, for instance, taking~$\eta(i) \in [-0.2,1.2]$~\cite{Bershad08a, Jeronimo16c}. The \emph{convex supervisor} takes~$f$ to be a sigmoidal function, so that~\eqref{E:SupervisorModel} becomes
\begin{subequations}\label{E:ConvexSupervisor}
\begin{align}
	a(i) &= a(i-1) + \mu_{a} e(i) \left[ y_{1}(i) - y_{2}(i) \right]
		\times
	\notag\\&\qquad\qquad\qquad\qquad\qquad
		\eta(i-1) \left[ 1 - \eta(i-1) \right]
		\label{E:ConvexSupervisor_a}
	\\
	\eta(i) &= \dfrac{1}{1 + e^{-a(i)}}
		\label{E:ConvexSupervisor_f}
\end{align}
\end{subequations}
To prevent stalling, $a(i)$ is usually restricted to~$[-4,4]$ either by saturating~$a$ or modifying~\eqref{E:ConvexSupervisor_f}~\cite{Jeronimo06m, Gredilla10a}.

Normalized versions of supervisors~\eqref{E:AffineSupervisor} and~\eqref{E:ConvexSupervisor} are obtained by taking~$\mu_{\eta},\mu_a = \tilde{\mu} / p(i)$ with~$p(i) = \beta e(i)^2 + (1-\beta) \, p(i-1)$ for some normalized step size~$\tilde{\mu} > 0$~\cite{Azpicueta08n, Candido10t, Jeronimo16c}. Although we use the unnormalized~\eqref{E:SupervisorModel} in our analyses, we illustrate the fact that the coefficients feedback topology is effective regardless of the supervisor by also displaying results for the normalized versions.

\begin{figure}[t]
	\centering
	\includesvg{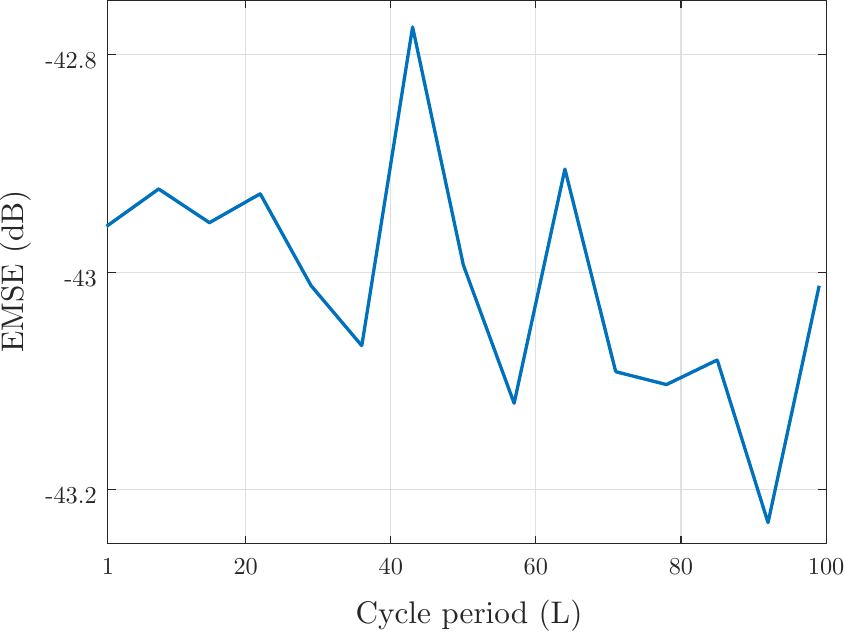}
	%\vspace*{-10pt}
	\caption{Steady-state performance of combinations with coefficients feedback for different~$L$. \textbf{Correlated scenario}:
		$M = 5$, $\sigma_u^2 = 1$, $\sigma_v^2 = 10^{-2}$, $\gamma = 0.7$,
			$\mu_1 = 0.01$, and~$\mu_2 = 0.002$.
			\textbf{Convex supervisor}: $\mu_{a} = 100$.
		Steady-state value is an average of 1000 iterations~(see Section~\ref{S:Sims}).}
		\label{F:cycleLengthStat1}
\end{figure}

\section{Cyclic Coefficients Feedback}
	\label{S:CoefFB}

Regardless of using convex or affine supervisors, normalized or not, convergence stagnation remains an issue in parallel-independent combinations. To mitigate this issue, a new topology was introduced in~\cite{Chamon12c} as an alternative to the coefficients transfers from Section~\ref{S:topology}. It is based on feeding back the globl coefficients to all component filters at every iteration~(Fig.~\ref{F:ParallelTopology}c). Formally, the \emph{a priori} coefficients in~\eqref{E:LMS} are defined as~$\bw_{n,a} = \bw_{i-1}$, for~$\bw_i$ as in~\eqref{E:Comb}. This is motivated by the fact that the supervisor adapts~\eqref{E:Comb} so that the global coefficients minimize the overall MSE. Hence, these feedbacks enforce that each AF in the combination updates the best available coefficient vector estimate at each iteration.

In practice, however, coefficients feedbacks have the effect of slowing down the supervisor adaptation, which degrades the combination performance~(Fig.~\ref{F:CyclicComponents}c). This is due to the fact that feedbacks reduces the difference between the component filters outputs~$y_n(i)$, since they all update the same \emph{a priori} coefficients. It is then clear from~\eqref{E:SupervisorModel} that the~$\eta_n(i)$ will change slowly. A simple solution to this issue is to make the feedback cyclical by taking
\begin{equation}\label{E:Feedback}
	\bw_{n,a} = \delta(i - rL) \bw_{i-1} +
		\left[ 1 - \delta(i - rL) \right] \bw_{n,i-1}
		\text{,}
\end{equation}
where~$L$ is a constant \emph{cycle period}, $\delta(i)$ is the Kronecker delta, and~$r \in \setN$~(Fig.~\ref{F:CyclicComponents}b).

Combinations with cyclic coefficients feedback have two limiting behaviors: (i)~for~$L = 1$, the relation in~\eqref{E:Feedback} reduces to~$\bw_{n,a} = \bw_{i-1}$ and the global coefficients are always fed back to the component filters; (ii)~for~$L \to \infty$, we can write~\eqref{E:Feedback} as~$\bw_{n,a} = \bw_{n,i-1}$ and the usual parallel-independent combination from Section~\ref{S:topology} is recovered. However, the advantage of coefficients feedback appear when~$1 < L \ll \infty$. It is worth noting that the overall performance of this topology is robust for a wide range of~$L$~(see Figs.~\ref{F:cycleLengthStat1} and~\ref{F:cycleLengthStat2}), so that values between~$50$ and~$150$ typically work well. Alternatively, a procedure to design the cycle period was proposed in~\cite{Chamon12c}.

Coefficients feedback provides all component filters with the best coefficients estimate available to the combination, i.e., the global coefficients. Moreover, they are neither directional nor limited to any pair of component filters as in the structures from Section~\ref{S:topology}. It is therefore straightforward to extend beyond two filter combinations. In terms of performance, this topology not only effectively addresses convergence stagnation, but can also improve the combination misadjustment, especially in nonstationary scenarios. These performance improvements are examined in the next two sections by analyzing the steady-state, tracking, and transient behavior of a combination of LMS filters with cyclic coefficients feedback. Although any AFs can be used in these combinations, we focus on LMS filters for the purpose of analysis. Before proceeding, however, we discuss a conceptual consequence of the coefficients feedback topology by formalizing the relation between combinations of AFs and VSS algorithms.

\begin{figure}[t]
	\centering
	\includesvg{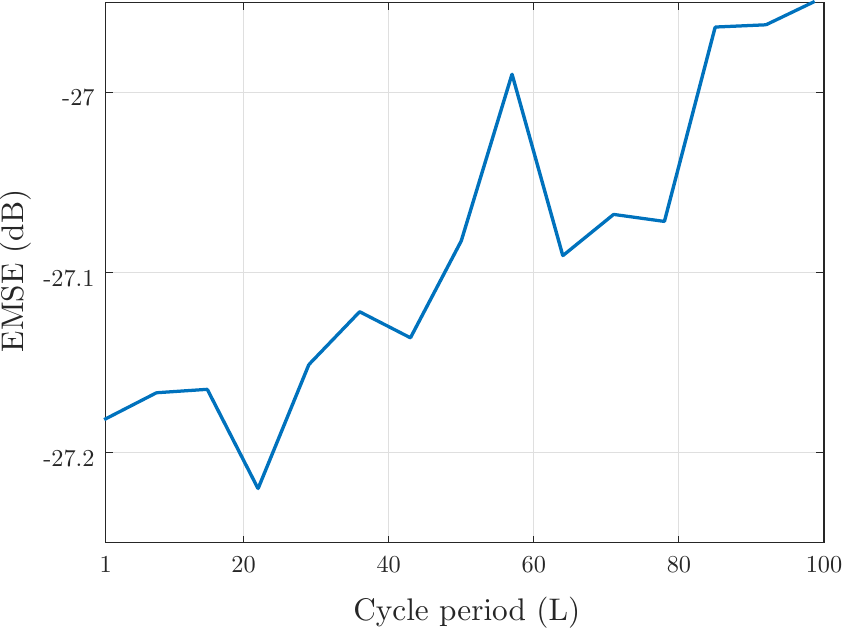}
	%\vspace*{-10pt}
	\caption{Tracking performance of combinations with coefficients feedback for different~$L$. \textbf{White nonstationary scenario}:
		$M = 5$, $\sigma_u^2 = 1$, $\sigma_v^2 = 10^{-2}$,
		$\bQ = 10^{-5} \bI$, $\mu_1 = 0.07$, and~$\mu_2 = 0.01$.
		\textbf{Convex supervisor}: $\mu_a = 100$.
		Steady-state value is an average of 1000 iterations~(see Section~\ref{S:Sims}).}
		\label{F:cycleLengthStat2}
\end{figure}

\subsection{VSS algorithms as combinations of adaptive filters}
	\label{S:vss}

The close relation between standalone VSS AFs and combinations of AFs was already observed in~\cite{Jeronimo06m}. The parallel-independent topology, however, cannot be used as a step size adaptation structure. This barrier prevents, for instance, the effective use of these combinations in nonstationary scenarios. In fact, a parallel-independent combination of LMS filters can only match the tracking performance of a standalone LMS filter with an optimally designed step size~$\mu^o$~\cite[Lemma~7.5.1]{Sayed08a} if one of its component filters has step size~$\mu^o$~\cite{Vitor10t}.

The coefficients feedback topology bridges this conceptual and practical gap. To see how this is the case, let all AFs of a combination have update equations of the form
\begin{equation*}
	\bw_{n,i} = \bw_{n,a} + \mu_n \bp_i(\bw_{n,a})
		\text{,} \quad \text{for } n = 1,\dots,N
\end{equation*}
where~$\bp_i$ denotes the update direction at iteration~$i$. For example, the LMS filter in~\eqref{E:LMS} is recovered by taking~$\bp_{i}(\bw_{n,a}) = \bu_{i} \left[ d(i) - \bu_i^T \bw_{n,a} \right]$. When feedback occurs at every iteration, i.e., $L = 1$ in~\eqref{E:Feedback}, the update reads
\begin{equation*}
	\bw_{n,i} = \bw_{i-1} + \mu_n \bp(\bw_{i-1})
\end{equation*}
for all component filters. It is then ready from~\eqref{E:Comb} and~\eqref{E:Unbiasedness} that the output of an unbiased combination is given by
\begin{equation}\label{E:VSS}
	\bw_{i} = \bw_{i-1} + \hat{\mu}(i) \bp( \bw_{i-1} )
		\text{,}
\end{equation}
where~$\hat{\mu}(i) = \sum \eta_n(i) \mu_n$ is an iteration dependent combination of the component filters step sizes~$\{ \mu_n \}$. Hence, the output of the combination has the form of a VSS version of its component filters.

Beyond its conceptual value, \eqref{E:VSS} also provides a viable alternative to step size adaptation rules such as~\cite{Kwong92v, Harris86v, Mayyas95r, Mathews93s}. Numerical experiments using combinations of LMS filters with cyclic coefficients feedback show that the resulting VSS algorithms are robust to changes in the environment and can track the optimal step-size even in stringent nonstationary scenarios~(e.g., see Fig.~\ref{F:2LMSNonStat}).

It is worth noting that step size adaptation is only one possible application of coefficients feedback. In fact, these combinations are only related to VSS algorithms when all the AFs in the pool use the same adaptation algorithm and feedback occurs at every iteration. Combinations with cyclic coefficients feedback are therefore more general structures.

\begin{figure}[t]
	\centering
	\includesvg{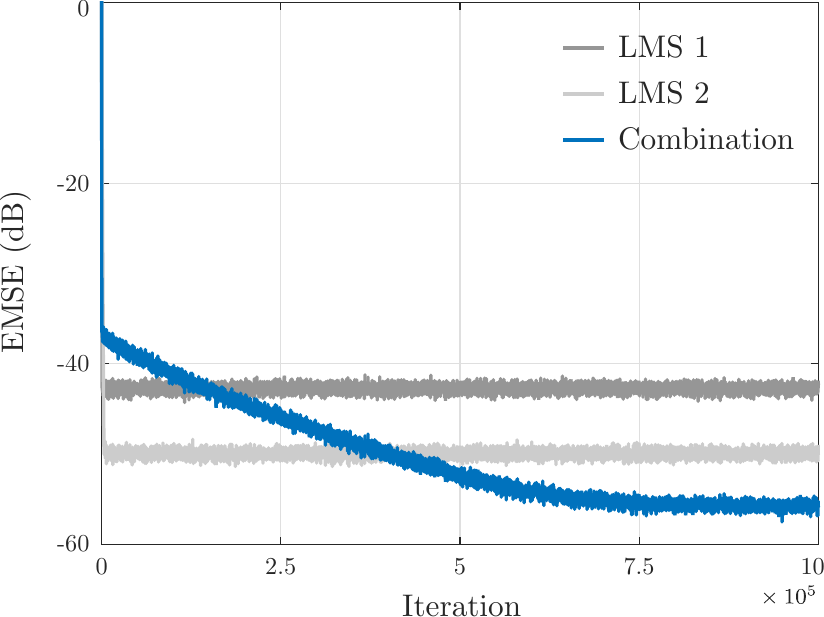}
%	\vspace*{-10pt}
	\caption{Steady-state of combination of LMS filters with cyclic coefficients feedback: EMSE performance.
	\textbf{White stationary scenario}~(see Section~\ref{S:Sims}):
		$M = 10$, $\sigma_u^2 = 1$, $\sigma_v^2 = 10^{-3}$, 
		$\mu_1 = 0.01$, $\mu_2 = 0.002$, and~$L = 20$.
	\textbf{Affine supervisor}: $\mu_\eta = 1.5$
		and~$\eta \geq -0.25 \Rightarrow \mubar \geq 0$.}
		\label{F:StatAffine1}
\end{figure}

\section{Steady-State and Tracking Performance}
	\label{S:SSTracking}

We begin the analysis of combinations of LMS filters with coefficients feedback by studying their steady-state performance in a system identification scenario. Although AFs can be used in a myriad of setups, their analyses are typically carried out in this setting as it is representative of applications such as echo cancellation, time delay estimation, and adaptive control~\cite{Sayed08a, Diniz13a, Huang06a, So01c, Widrow08a}. Let~$\bu_{i}$ be a~$M \times 1$ real-valued regressor vector with covariance matrix~$\bR_u = \E \bu_i \bu_i^T$ and let~$d(i)$ be a scalar measurement of the form
\begin{equation}\label{E:DataModel}
	d(i) = \bu_{i}^T \bw^{o}_i + v(i)
		\text{,}
\end{equation}
where~$\bw^{o}_i \in \setR^M$ represents the unknown system at iteration~$i$ and~$v(i)$ is a zero-mean white Gaussian noise with variance~$\sigma_{v}^{2}$. The system coefficients~$\bw^{o}_i$ follow the first-order Markov process
\begin{equation}\label{E:RandomWalkModel}
	\bw^o_{i} = \bw^o_{i-1} + \bq_i
		\text{,}
\end{equation}
where~$\bw^o_{-1} = \bw^o$ is the initial condition and~$\bq_i \in \setR^M$ is a zero-mean i.i.d{.} sequence of vectors with covariance matrix~$\bQ = \E \bq_i \bq_i^T$. The stationary case is recovered for\ $\bq_i = \bzero \Rightarrow \bQ = \bzero$. To make the derivations more tractable, we adopt the following typical assumptions~\cite{Sayed08a, Diniz13a}:

\begin{assumption}[Noise independence]\label{A:NoiseIndependence}

$\{ u(i), v(j) \}$ are independent for all $i,j$.

\end{assumption}

\begin{assumption}[Random walk independence]\label{A:RandomWalk}

The random variable $\bq_i$ is statistically independent of the initial conditions $\{ \bw^o_{-1}, \bw_{n,-1} \}$, of $\{ \bu_j, v(j) \}$ for all~$i,j$, and of~$\{ d(j) \}$ for~$j < i$.

\end{assumption}

Our performance metric of interest is the excess MSE~(EMSE) defined as follows. Let~$\tilw_{i} = \bw^o_i - \bw_i$ and~$\tilw_{n,i} = \bw^o_i - \bw_{n,i}$ be the global and local coefficient error vectors respectively. Define the global \emph{a priori} error as~$e_a(i) = \bu_i^T (\bw^o_i - \bw_{i-1})$ and its local counterpart as~$e_{a,n}(i) = \bu_i^T (\bw^o_i - \bw_{n,i-1})$. For a combination of two filters, we then have
\begin{equation}\label{E:EMSEs}
\begin{aligned}
	\zeta(i) &= \E e_a^2(i)
				& \text{(global EMSE)}
	\\
	\zeta_n(i) &= \E e_{a,n}^2(i) \text{, for } n = 1,2
				& \text{(local EMSE)}
	\\
	\zeta_{12}(i) &= \E e_{a,1}(i) e_{a,2}(i)
				& \text{(cross-EMSE)}
\end{aligned}
\end{equation}

\begin{figure}[t]
	\centering
	\includesvg{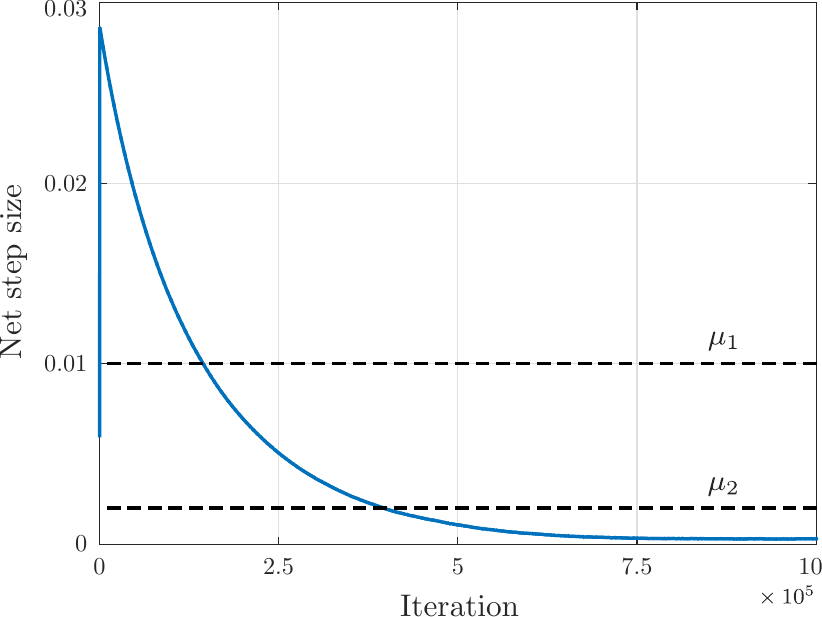}
%	\vspace*{-10pt}
	\caption{Steady-state of combination of LMS filters with cyclic coefficients feedback: equivalent step-size of combination~(same setting as Fig.~\ref{F:StatAffine1}).}
		\label{F:StatAffine2}
\end{figure}

Before proceeding, we derive some relations that will be useful throughout our analysis. First, subtract the global coefficients~\eqref{E:2Comb} from~$\bw^o_i$ to get
\begin{equation}\label{E:GlobalCoefError}
	\tilw_{i} = \eta(i) \tilw_{1,i}
		+ \left[ 1-\eta(i) \right] \tilw_{2,i}
		\text{.}
\end{equation}
Also, subtracting~$\bw^o_i$ from~\eqref{E:2Comb} at~$i-1$ and multiplying by~$\bu_i^T$ yields
\begin{equation}\label{E:GlobalAPriori}
	e_a(i) = \eta(i-1) e_{a,1}(i) + \left[ 1-\eta(i-1) \right] e_{a,2}(i)
		\text{.}
\end{equation}
Using the definition of the component filters output, we obtain
\begin{equation}\label{E:outputDiff}
	y_{1}(i) - y_{2}(i) = \bu_i^T \bw_{1,i-1} - \bu_i^T \bw_{2,i-1}
		= e_{a,2}(i) - e_{a,1}(i)
		\text{,}
\end{equation}
by adding and subtracting~$\bu_i^T \bw^o_i$. Finally, we can use~\eqref{E:DataModel} to write the global estimation error~$e(i)$ in terms of the \emph{a priori} error as
\begin{equation}\label{E:estimationAPrioriError}
\begin{aligned}
	e(i) &= d(i) - \bu_i^T \bw_{i-1}
	\\
	{}&= \bu_{i}^T \bw^{o}_i + v(i) - \bu_i^T \bw_{i-1} = e_a(i) + v(i)
		\text{.}
\end{aligned}
\end{equation}
Note from~\eqref{E:estimationAPrioriError} that under~A.\ref{A:NoiseIndependence} the global MSE is given by~$\E e(i)^2 = \zeta(i) + \sigma_v^2$. Hence, although we carry out all derivations for the EMSE, our results can readily be used to measure performance with respect to the MSE.

We are now ready to state the main result of this section.

\begin{theorem}\label{T:SS}

Consider the system identification scenario in~\eqref{E:DataModel}--\eqref{E:RandomWalkModel} and a supervisor of the form~\eqref{E:SupervisorModel}. Under assumptions~A.\ref{A:NoiseIndependence}--A.\ref{A:SSSupervisorVar}, the steady-state value of the global EMSE of a combination of LMS filters with coefficients feedback is
\begin{equation}\label{E:2LMSFB}
	\zeta =
	\frac{
		\mubar \trace(\bR_u) \sigma_v^2 + \mubar^{-1} \trace (\bQ)
	}{
		2 - \mubar \trace(\bR_u)
	}
		\text{,}
\end{equation}
where~$\mubar = \etabar \mu_1 + (1-\etabar) \mu_2$, $\etabar$ is the restriction of~$\eta^o$ onto~$[f(a^{-}),f(a^{+})]$, and
\begin{equation}\label{E:SSSupervisor}
	\eta^o = \frac{
		c - \trace(\bR_u) \left[ \trace(\bQ) + 2 \mu_2 \sigma_v^2 \right]
	}{
		2 (\mu_1 - \mu_2) \trace(\bR_u) \sigma_v^2
	}
		\text{,}
\end{equation}
with~$c^2 = \trace(\bR_u)^2 \trace(\bQ)^2 + 4 \sigma_v^2 \trace(\bR_u) \trace(\bQ)$ is the supervising parameter that minimizes the global MSE.

\end{theorem}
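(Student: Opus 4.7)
My plan is to leverage the observation of Section~\ref{S:vss} that, on iterations where feedback occurs, the combination of LMS filters behaves as a single VSS-LMS with instantaneous step size $\hat{\mu}(i)=\eta(i)\mu_1+[1-\eta(i)]\mu_2$. The task will therefore be twofold: first, reduce the steady-state dynamics of the feedback combination to those of a standalone LMS filter with effective step size $\mubar=\etabar\mu_1+(1-\etabar)\mu_2$, so that the classical LMS tracking EMSE formula~\cite{Sayed08a} yields~\eqref{E:2LMSFB} directly; second, identify $\etabar$ as (the restriction of) the minimizer of that EMSE over $\mubar$. Since the theorem's EMSE does not depend on the cycle length $L$, the reduction will necessarily be asymptotic and will rely on A.\ref{A:SSSupervisorVar}, which I read as a bound on the steady-state variance of $\eta(i)$.

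\textbf{Steady-state EMSE via energy conservation.} Combining~\eqref{E:2Comb},~\eqref{E:LMS}, and~\eqref{E:Feedback}, on a feedback iteration I will write the global coefficient-error recursion as
\begin{equation*}
  \tilw_i = \tilw_{i-1} + \bq_i - \hat{\mu}(i)\,\bu_i\,[e_a(i)+v(i)],
\end{equation*}
using~\eqref{E:estimationAPrioriError}, and absorb the $L-1$ non-feedback iterations into a perturbation that vanishes in the asymptotic regime. Taking squared norms and expectations under A.\ref{A:NoiseIndependence}--A.\ref{A:RandomWalk}, I would invoke the standard small-step-size decoupling $\E\|\bu_i\|^2(\bu_i^T\tilw_{i-1})^2\approx\trace(\bR_u)\,\zeta(i)$ and use A.\ref{A:SSSupervisorVar} to replace $\E\hat{\mu}(i)$ and $\E\hat{\mu}(i)^2$ by $\mubar$ and $\mubar^2$ respectively, producing the scalar recursion
\begin{equation*}
  \E\|\tilw_i\|^2 = \E\|\tilw_{i-1}\|^2 + \trace(\bQ) - 2\mubar\,\zeta(i) + \mubar^2\trace(\bR_u)\,[\zeta(i)+\sigma_v^2].
\end{equation*}
Setting $\zeta(i)=\zeta$ at steady state and solving for $\zeta$ gives~\eqref{E:2LMSFB} immediately.

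\textbf{Optimal supervising parameter and main obstacle.} Since the supervisor~\eqref{E:SupervisorModel} performs stochastic gradient descent on $\E e(i)^2=\zeta+\sigma_v^2$, at steady state the effective step size $\mubar$ must be the minimizer of~\eqref{E:2LMSFB} over the attainable range of $\mubar$. Differentiating~\eqref{E:2LMSFB} with respect to $\mubar$ and clearing the denominator reduces the first-order condition to the quadratic
\begin{equation*}
  \trace(\bR_u)\sigma_v^2\,\mubar^2 + \trace(\bR_u)\trace(\bQ)\,\mubar - \trace(\bQ) = 0,
\end{equation*}
whose positive root is $\mu^o=[c-\trace(\bR_u)\trace(\bQ)]/[2\trace(\bR_u)\sigma_v^2]$, with $c$ as in the statement; inverting the affine map $\mubar=\mu_2+(\mu_1-\mu_2)\eta$ then yields~\eqref{E:SSSupervisor}, and restricting to $[f(a^{-}),f(a^{+})]$ accounts for the saturation built into~\eqref{E:SupervisorModel}. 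The main obstacle I anticipate lies in paragraph two: one must control the drift of each $\bw_{n,i}$ away from $\bw_i$ during the $L-1$ non-feedback iterations and argue that this drift does not contaminate $\E\hat{\mu}(i)$ or $\E\hat{\mu}(i)^2$ at the order needed for~\eqref{E:2LMSFB}. Assumption A.\ref{A:SSSupervisorVar} is almost certainly the hypothesis that makes this decoupling go through.
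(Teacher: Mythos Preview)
Your approach is essentially the paper's: derive the scalar energy-conservation recursion for the global error, invoke the small-variance assumption to collapse $\E\hat\mu$ and $\E\hat\mu^2$ to $\mubar$ and $\mubar^2$, solve for~\eqref{E:2LMSFB}, and then minimize over $\mubar$ to obtain~\eqref{E:SSSupervisor}. Two points deserve correction. First, the $L>1$ issue is not handled by A.\ref{A:SSSupervisorVar}; the paper introduces a separate assumption A.\ref{A:SSCyclePeriod} (small cycle period) that simply \emph{posits} the steady-state behaves as if $L=1$, so no analytic control of the inter-feedback drift is attempted---your anticipated ``main obstacle'' is dispatched by hypothesis, not by argument. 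Second, your claim that the steady-state $\etabar$ must minimize $\zeta$ ``because the supervisor performs SGD on the MSE'' is a heuristic the paper does not rely on; instead it proves (Proposition~\ref{T:etabar}) that under A.\ref{A:SSSupervisorSeparation} any fixed point of the mean supervisor update satisfies $\E e(i)[y_1(i)-y_2(i)]=0$, which expands to $(1-\eta^\star)\Delta\zeta_2-\eta^\star\Delta\zeta_1=0$ and coincides with the first-order condition for the quadratic $\zeta(\eta)=\eta^2\zeta_1+2\eta(1-\eta)\zeta_{12}+(1-\eta)^2\zeta_2$. With those two fixes your outline is the paper's proof.
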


Given the relation between combinations with coefficients feedback and VSS algorithms, it is not surprising that~\eqref{E:2LMSFB} is equivalent to the expression for the steady-state of a standalone LMS filter with step size~$\mubar$~\cite[Lemma~7.5.1]{Sayed08a}. Theorem~\ref{T:SS}, however, also states that~$\mubar$ is chosen by the supervisor to minimize the global MSE~(see Proposition~\ref{T:etabar}). In the nonstationary case, coefficients feedback therefore force the supervisor to explicitly track the optimal step size~$\mu^o$ from~\cite[Lemma~7.5.1]{Sayed08a}. For convex or affine supervisors, it then suffices that~$\mu_1 \leq \mu^o \leq \mu_2$ for the coefficients feedback topology to outperform any parallel-independent combination that does not have at least one component filter with step size~$\mu^o$. Even if it does, recall that~$\mu^o$ depends on~$\bR_u$, $\bQ$, and~$\sigma_v^2$, so that parallel-independent combinations would not robust to changes in the environment. The adaptive step size effect of coefficients feedback, however, mitigates this issue.

Theorem~\ref{T:SS} may no longer be valid in stationary scenarios, especially if the range of the activation function~$f$ from~\eqref{E:SupervisorModel} is unbounded, e.g., for the affine supervisor~\eqref{E:AffineSupervisor}. In this case, $\bQ = \bzero$ implies that~$\eta^o \to \mu_2 (\mu_1 - \mu_2)^{-1}$ and~$\mubar \to 0$ at steady-state. However, Figs.~\ref{F:StatAffine1} and~\ref{F:StatAffine2} illustrates that, although~$\mubar$ and the EMSE decrease considerably, they both converge to a non-vanishing steady-state. This occurs because as the EMSE diminishes, the variance of the supervising parameter becomes non-negligible, violating the approximations used to derive Theorem~\ref{T:SS}. Nevertheless, Theorem~\ref{T:SS} holds for the convex supervisor or any other case in which~$[f(a^{-}),f(a^{+})]$ does not allow~$\mubar$ to vanish. Otherwise, we can resort to the transient analysis in Section~\ref{S:Transient}.

The remainder of this section is dedicated to proving Theorem~\ref{T:SS}. We proceed in three steps. First, we introduce the different steady-state regimes of the cyclic coefficients feedback topology and argue that for small cycle periods the combination performs as if~$L = 1$~(Section~\ref{S:SSGlobal}). Then, we derive the component filters error statistics required to obtain~\eqref{E:2LMSFB}~(Section~\ref{S:SSComponents}) and determine the steady-state value of the supervising parameter~(Section~\ref{S:SSSupervisor}). Recall that even though all derivations are carried out for two component filters, the results can be extended to arbitrary~$N$ by evaluating additional error statistics or using hierarchical combinations~\cite{Cassio07e, Jeronimo08a}.

\subsection{Global steady-state analysis}
	\label{S:SSGlobal}

Analyzing the asymptotic performance of combinations with cyclic coefficients feedback is intricate due to the dynamic nature of their steady-state. Indeed, for cycle periods~$1 < L < \infty$, their topology alternates between parallel-independent and parallel with coefficients feedback, so that the component filters and the output of the combination reach a \emph{cyclostationary} regime as~$i \to \infty$, as illustrated in Fig.~\ref{F:CyclicComponents}b. In contrast, the limit cases~$L \to \infty$ and~$L = 1$ have static topologies and stationary steady-state errors~(Figs.~\ref{F:CyclicComponents}a and~\ref{F:CyclicComponents}c respectively). To account for cyclic feedback without resorting to a full transient analysis as in Section~\ref{S:Transient}, we leverage the fact that since feedbacks are cyclical solely to avoid stalling the supervisor adaptation, we care only about performance for small~$L$. Hence, we can use the following approximation, is valid for a significant range of~$L$~(Figs.~\ref{F:cycleLengthStat1} and~\ref{F:cycleLengthStat2}):

\begin{assumption}[Small cycle period]\label{A:SSCyclePeriod}

For small cycle period, the combination performs at steady-state as if~$L = 1$.

\end{assumption}

Moreover, we can decouple the analysis of the global error and the supervisor by assuming that

\begin{assumption}[Steady-state supervisor separation principle]\label{A:SSSupervisorSeparation}

At steady-state, $\eta(i)$ varies slowly compared to the coefficients error vector $\tilw_{n,i}$ and the \emph{a priori} errors $e_{a,n}(i)$, so that their expected values can be separated as in $\E [ \eta(i) \tilw_{n,i} ] = \E \eta(i) \E \tilw_{n,i}$ and $\E [ \eta(i) e_{a,n}(i) ] = \E \eta(i) \E e_{a,n}(i)$, $i \to \infty$.%
%
%\footnote{This assumption can also be stated in terms of the conditional independence of the variables given $\eta_n(i)$ as in~\cite{Vitor09t}.}

\end{assumption}

\noindent This assumption is supported by simulations and has been successfully used in the steady-state analysis of parallel-independent topologies, i.e., $L \to \infty$, for both affine~\cite{Bershad08a, Candido10t, Kozat11t} and convex~\cite{Jeronimo06m, Magno08i} supervisors. Using~A.\ref{A:SSSupervisorSeparation}, we can now derive the result of Theorem~\ref{T:SS} in two parts, by first analyzing the component filters and then the supervisor.

\subsection{Component filters steady-state analysis}
	\label{S:SSComponents}

Start by taking the expected value of the squared norm of the coefficient error vector~\eqref{E:GlobalCoefError} to get
\begin{align*}
	\E \norm{\tilw_i}^2 &= \E \eta^2(i) \norm{\tilw_{1,i}}^2
	+ 2 \E \eta(i) [1-\eta(i)] \tilw_{1,i}^T \tilw_{2,i}
	\\
	{}&+ \E [1-\eta(i)]^2 \norm{\tilw_{2,i}}^2
		\text{.}
\end{align*}
Then, using A.\ref{A:SSSupervisorSeparation}, we can separate the expectations and obtain
\begin{equation}\label{E:GlobalMSD}
\begin{aligned}
	\E \norm{\tilw_i}^2 &= \E\eta^2 \E \norm{\tilw_{1,i}}^2
		+ 2 \E \eta [1-\eta] \E \tilw_{1,i}^T \tilw_{2,i}
	\\
	{}&+ \E[1-\eta]^2 \E \norm{\tilw_{2,i}}^2
		\text{,} \quad \text{as } i \to \infty
		\text{.}
\end{aligned}
\end{equation}
The component filters error statistics required in~\eqref{E:GlobalMSD} can be evaluated using an energy conservation argument as in~\cite{Sayed08a}. We do so by adopting the following assumption to derive the variance and covariance relations of the AFs in the combination.

\begin{assumption}[Data separation principle]\label{A:SSData}

$\norm{\bu_i}^2$ is independent of $e_{a,n}(i)$, and consequently $e_a(i)$, at steady-state.

\end{assumption}

\noindent We then have the following proposition:

\begin{proposition}\label{T:VarCovar}

Under~A.\ref{A:NoiseIndependence}, A.\ref{A:RandomWalk}, and~A.\ref{A:SSData}, the variance and covariance relations for the LMS component filters of a combination with coefficients feedback~($L = 1$) are given by
\begin{subequations}\label{E:PreFB}
\begin{align}
	\E \norm{\tilw_{n,i}}^2 &= \E \norm{\tilw_{i-1}}^2 + \trace(\bQ)
	\notag\\
	{}&- 2 \mu_{n} \zeta(i)
		+ \mu_{n}^2 \trace(\bR_u) \left[ \zeta(i) + \sigma_v^2 \right]
		\text{,}
	\\
	\E \tilw_{1,i}^T \tilw_{2,i} &= \E \norm{\tilw_{i-1}}^2 + \trace(\bQ)
	- (\mu_1 + \mu_2) \zeta(i)
	\notag\\
	{}&+ \mu_1 \mu_2 \trace(\bR_u) \left[ \zeta(i) + \sigma_v^2 \right]
		\text{,}
\end{align}
\end{subequations}
as~$i \to \infty$.
\end{proposition}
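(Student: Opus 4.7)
The plan is to apply a standard energy-conservation / variance-relation argument, adapted to the fact that under $L=1$ every component filter shares the same a priori estimate $\bw_{i-1}$. I would first substitute $\bw_{n,a} = \bw_{i-1}$ into~\eqref{E:LMS}, subtract both sides from $\bw^o_i$, and invoke the random-walk model~\eqref{E:RandomWalkModel} together with~\eqref{E:estimationAPrioriError} to obtain the compact error recursion $\tilw_{n,i} = \tilw_{i-1} + \bq_i - \mu_n \bu_i e(i)$. The structural point to exploit is that both component filters differ only in the scalar $\mu_n$, so the variance computation and the covariance computation are essentially the same algebraic exercise with $\mu_n^2$ replaced by $\mu_1\mu_2$.

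Next, I would square the recursion (respectively, take the inner product of the two filters' recursions for the cross term), pass to expectations, and dispatch each resulting piece using the assumptions: the $\bq_i$ cross terms with $\tilw_{i-1}$ and $\bu_i$ vanish by A.\ref{A:RandomWalk} (zero mean and independence); the $v(i)$ cross terms vanish by A.\ref{A:NoiseIndependence}; $\E\|\bq_i\|^2 = \trace(\bQ)$; and the data-separation principle A.\ref{A:SSData} factorizes the gradient-noise piece $\E[\|\bu_i\|^2 e(i)^2]$ as $\trace(\bR_u)[\zeta(i)+\sigma_v^2]$. Collecting what remains yields exactly the right-hand sides of~\eqref{E:PreFB}: $\E\|\tilw_{i-1}\|^2 + \trace(\bQ)$ from the quadratic part, $-2\mu_n \zeta(i)$ or $-(\mu_1+\mu_2)\zeta(i)$ from the linear part, and $\mu_n^2 \trace(\bR_u)[\zeta(i)+\sigma_v^2]$ or $\mu_1\mu_2 \trace(\bR_u)[\zeta(i)+\sigma_v^2]$ from the gradient-noise part.

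The main obstacle is the linear cross term $\E[(\tilw_{i-1}+\bq_i)^T \bu_i e(i)]$, which one must identify with $\zeta(i)$. Care is needed because $e(i)$ itself depends on $\bq_i$ through $d(i)$. Writing $e(i) = \bu_i^T \tilw_{i-1} + \bu_i^T \bq_i + v(i)$ and evaluating term by term, the only surviving pieces are $\E(\bu_i^T \tilw_{i-1})^2$ and $\E[\bq_i^T \bu_i \bu_i^T \bq_i]$. By the definition of $\zeta(i)$ together with independence of $\bq_i$ from $(\bu_i,\tilw_{i-1})$, the first equals $\zeta(i) - \trace(\bR_u \bQ)$, while A.\ref{A:RandomWalk} gives exactly $\trace(\bR_u \bQ)$ for the second; the two corrections cancel, leaving $\zeta(i)$. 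This exact cancellation, rather than a small-$\bQ$ approximation, is what keeps~\eqref{E:PreFB} clean; once it is verified, the remainder of the derivation is purely algebraic assembly and the cross-covariance identity drops out by substituting $\mu_1\mu_2$ for $\mu_n^2$ throughout.
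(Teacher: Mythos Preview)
Your proposal is correct and follows essentially the same route as the paper: derive $\tilw_{n,i} = \tilw_{i-1} + \bq_i - \mu_n \bu_i e(i)$, take the relevant inner products in expectation, and simplify via A.\ref{A:NoiseIndependence}, A.\ref{A:RandomWalk}, and A.\ref{A:SSData}. The only difference is that what you flag as ``the main obstacle'' is handled in one line in the paper by recognizing that $(\tilw_{i-1}+\bq_i)^T\bu_i = \bu_i^T(\bw^o_i - \bw_{i-1}) = e_a(i)$ by definition, so the cross term is simply $\E e_a(i)e(i) = \zeta(i)$; your cancellation of the two $\trace(\bR_u\bQ)$ pieces is just this identity expanded out.
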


\begin{proof}
See Appendix~\ref{AP:VarCovar}.
\end{proof}

Note that, as opposed to the analysis of parallel-independent combinations~\cite{Jeronimo06m, Bershad08a, Candido10t}, the recursions in~\eqref{E:PreFB} are coupled. Substituting~\eqref{E:PreFB} into the steady-state relation~\eqref{E:GlobalMSD} gives
\begin{equation}\label{E:PreFB2}
\begin{aligned}
	\E \norm{\tilw_{i}}^2 &= \E \norm{\tilw_{i-1}}^2 + \trace(\bQ)
	- 2 \E \hat{\mu} \zeta(i)
	\\
	{}&+ \E \hat{\mu}^2
		\trace(\bR_u) \left[ \zeta(i) + \sigma_v^2 \right]
		\text{, as } i \to \infty \text{,}
\end{aligned}
\end{equation}
where~$\hat{\mu} = \eta \mu_{1} + [1-\eta] \mu_{2}$ is the steady-state \emph{net step size} of the combination with~$\eta = \lim_{i \to \infty} \eta(i)$. Note that~$\hat{\mu}$ is a random variable due to its dependence on~$\eta$.

To conclude the component analysis, recall that under~A.\ref{A:SSCyclePeriod} we only consider the~$L = 1$ case in which the combination topology is static. Hence,~$\E \norm{\tilw_{i}}^2$ eventually converges to a stationary value for a proper choice of~$\mu_1$ and~$\mu_2$, and~$\E \norm{\tilw_{i}}^2 = \E \norm{\tilw_{i-1}}^2$ as~$i \to \infty$. We can therefore rewrite~\eqref{E:PreFB2} as
\begin{equation}\label{E:2LMSFB1}
	\E \hat{\mu}^2 \trace(\bR_u) \left[ \zeta + \sigma_v^2 \right]
		+ \trace(\bQ) = 2 \E \hat{\mu} \zeta
		\text{.}
\end{equation}
Recall that~$\zeta$ is the steady-state value of the EMSE~$\zeta(i)$. All that remains to obtain~\eqref{E:2LMSFB} is to determine the supervisor moments needed to evaluate~\eqref{E:2LMSFB1}.

\subsection{Supervisor steady-state analysis}
	\label{S:SSSupervisor}

To analyze the steady-state value of the supervising parameter we adopt the following typical assumption~\cite{Bershad08a, Candido10t, Kozat11t, Jeronimo06m, Magno08i}:

\begin{assumption}[Small variance assumption]\label{A:SSSupervisorVar}

The variance of $\eta(i)$ becomes negligible as $i \to \infty$, so that $\E \eta^2(i) \approx [\E \eta(i)]^2$.

\end{assumption}

\noindent Immediately, \eqref{E:2LMSFB1} becomes
\begin{equation}\label{E:SSPreEMSE}
	\mubar \trace(\bR_u) \left[ \zeta + \sigma_v^2 \right]
		+ \trace(\bQ) = 2 \mubar \zeta
		\text{,}
\end{equation}
recalling that~$\mubar = \etabar \mu_1 + (1-\etabar) \mu_2$ with~$\etabar = \lim_{i\to\infty} \E \eta(i)$. It is ready that~\eqref{E:SSPreEMSE} can be rearranged as in~\eqref{E:2LMSFB}.

To obtain~$\etabar$, notice from~\eqref{E:GlobalMSD} that the mean supervising parameter must converge to a fixed value for the combination to reach global steady-state, i.e., for~$\E \norm{\tilw_{i}}^2 = \E \norm{\tilw_{i-1}}^2$. In other words, $\etabar$ in~\eqref{E:SSPreEMSE} must be a fixed point of the expected value of~\eqref{E:SupervisorModel}. We characterize this fixed point in the following proposition.

\begin{proposition}\label{T:etabar}

Let~$\eta^o \in \argmin_\eta \zeta$ denote a supervising parameter that minimizes the steady-state global EMSE and~$\eta^\star$ be a fixed point of the expected value of~\eqref{E:SupervisorModel}. Under~A.\ref{A:NoiseIndependence}, A.\ref{A:RandomWalk}, and~A.\ref{A:SSSupervisorSeparation}, if~$\eta^o \in [f(a^{-}),f(a^{+})]$, then~$\eta^\star = \eta^o$.
\end{proposition}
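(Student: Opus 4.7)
The approach is to impose the fixed-point condition on the mean supervisor recursion and show that it coincides with the first-order optimality condition for the MSE-minimizing combiner. Throughout, I would work at steady state.

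First, I would take expectations of \eqref{E:adaptA} and impose $\E a(i) = \E a(i-1)$, yielding the fixed-point relation $\E\{e(i)[y_1(i)-y_2(i)]\,f^\prime[a(i-1)]\} = 0$. Invoking A.\ref{A:SSSupervisorSeparation}, $a$ (and therefore $f^\prime[a]$) varies slowly relative to the error quantities, so the expectation factors as $\E f^\prime[a]\cdot \E\{e(i)[y_1(i)-y_2(i)]\} = 0$. Because $f$ is strictly increasing, $f^\prime > 0$, which forces $\E\{e(i)[y_1(i)-y_2(i)]\} = 0$. Substituting \eqref{E:estimationAPrioriError} and using A.\ref{A:NoiseIndependence} to discard the zero-mean independent noise contribution, the condition reduces to $\E\{e_a(i)[y_1(i)-y_2(i)]\} = 0$. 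I would then substitute \eqref{E:GlobalAPriori} for $e_a$ and \eqref{E:outputDiff} for $y_1 - y_2$, pull $\etabar$ outside the expectations by A.\ref{A:SSSupervisorSeparation}, and express everything in terms of $\zeta_1$, $\zeta_2$, and $\zeta_{12}$. Solving this linear equation in $\etabar$ produces an explicit formula for the fixed point $\eta^\star$.

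Second, I would derive the optimality condition for $\eta^o$. Under A.\ref{A:SSSupervisorSeparation}, the steady-state global EMSE reads $\zeta = \eta^2 \zeta_1 + 2\eta(1-\eta)\zeta_{12} + (1-\eta)^2 \zeta_2$ with the local and cross error statistics treated as slowly varying in $\eta$. Setting $\partial\zeta/\partial\eta = 0$ yields exactly the linear equation obtained in the previous step, so $\eta^\star = \eta^o$. To close the argument, I would observe that since $a$ is saturated to $[a^-,a^+]$, the realizable $\eta$ lies in $[f(a^-),f(a^+)]$; whenever $\eta^o$ belongs to this interval, the unconstrained optimum is feasible and coincides with the interior fixed point of the supervisor.

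The main obstacle is rigorously justifying the factoring in the first step: $f^\prime[a(i-1)]$ is a nonlinear function of a slowly varying random variable, so one must argue that A.\ref{A:SSSupervisorSeparation} extends from $\eta = f(a)$ to $f^\prime(a)$ as well. A secondary subtlety is that under coefficients feedback, $\zeta_1$, $\zeta_2$, and $\zeta_{12}$ themselves depend on $\etabar$ through the coupled recursion in Proposition~\ref{T:VarCovar}; A.\ref{A:SSSupervisorSeparation} is precisely the device that permits differentiating $\zeta$ with these quantities held fixed, so that the two linear equations match term by term.
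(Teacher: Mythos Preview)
Your proposal is correct and follows essentially the same approach as the paper: both derive the fixed-point condition $\E\{e(i)[y_1(i)-y_2(i)]\}=0$ by factoring out $\E f^\prime[a]$ via A.\ref{A:SSSupervisorSeparation} and using $f^\prime>0$, expand it with \eqref{E:GlobalAPriori}--\eqref{E:estimationAPrioriError} into a linear equation in $\eta$, and match it against $\partial\zeta/\partial\eta=0$ for the quadratic $\zeta=\eta^2\zeta_1+2\eta(1-\eta)\zeta_{12}+(1-\eta)^2\zeta_2$. The only cosmetic difference is that the paper computes $\eta^o$ first and then checks it satisfies the fixed-point equation, whereas you proceed in the reverse order; the subtleties you flag about extending A.\ref{A:SSSupervisorSeparation} to $f^\prime(a)$ and treating the $\zeta_n$ as fixed when differentiating are handled in the paper exactly as you describe, by direct appeal to the separation principle.
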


\begin{proof}
See Appendix~\ref{AP:Eta}.
\end{proof}

Proposition~\ref{T:etabar} states that, unless constrained by the activation function, the fixed point of the mean supervising parameter update minimizes the global MSE. Hence, we can approximate the mean steady-state value of the supervising parameters by minimizing the steady-state EMSE expression~\eqref{E:2LMSFB} and projecting the minimum onto the range of~$f$. Similar approximations were used in the steady-state analysis of parallel-independent combinations~\cite{Jeronimo06m, Bershad08a, Candido08a}.

Explicitly, the minimum of~\eqref{E:2LMSFB} is obtained when
\begin{equation}\label{E:preEtao}
\begin{aligned}
	\frac{\del \zeta}{\del \etabar} &= 
	\trace(\bR_u) \sigma_v^2 (\mu_1 - \mu_2)^2 \etabar^2
	\\
	{}&+ \trace(\bR_u) \left[ \trace(\bQ) + 2 \mu_2 \sigma_v^2 \right]
		(\mu_1 - \mu_2) \etabar
	\\
	{}&+ \trace(\bR_u) \sigma_v^2 \mu_2^2
		+ \trace(\bR_u) \trace(\bQ) \mu_2 - \trace(\bQ) = 0
		\text{.}
\end{aligned}
\end{equation}
Taking~$\eta^o$ to be the root of~\eqref{E:preEtao} such that the net step size~$\eta^o \mu_1 + (1-\eta^o) \mu_2 \geq 0$ yields~\eqref{E:SSSupervisor} and concludes the proof of Theorem~\ref{T:SS}.

\section{Transient Performance}
	\label{S:Transient}

The performance analysis from Section~\ref{S:SSTracking} show that coefficients feedback can be used to improve the steady-state and tracking of parallel combinations. Recall, however, that the initial motivation for this topology was addressing the convergence stagnation issue. In this section, we therefore examine the transient behavior of a combination of LMS filters with cyclic coefficients feedback and in doing so, study the impact of cyclic feedback on the supervisor variance. Although transient analyses of convex and affine supervising rules in parallel-independent topologies have been carried out~\cite{Candido10t, Vitor09t, Magno10t, Kozat11t}, the following derivations are valid for generic activation functions by extending the approach from~\cite{Vitor09t}.

In what follows, we consider a stationary system identification scenario, i.e., $\bq_i = 0$ in~\eqref{E:RandomWalkModel}, so that the coefficient error vectors become~$\tilw_{i} = \bw^o - \bw_{i}$ and~$\tilw_{n,i} = \bw^o - \bw_{n,i}$ and the \emph{a priori} errors can be written as~$e_{a}(i) = \bu_i \tilw_{i-1}$ and~$e_{a,n}(i) = \bu_i \tilw_{n,i-1}$. We also need to strengthen some of the assumptions from Section~\ref{S:SSTracking}. Namely,

\begin{assumption}[Input signal assumptions]\label{A:TrData}
$\{ \bu_{i} \}$ is an i.i.d.\ sequence of Gaussian vectors with covariance~$\bR_u$ and independent of~$v(j)$ for all~$i,j$. Consequently, $\{ \bu_{i}, \tilw_{j} \}$, $\{ d(i), d(j) \}$, and $\{ \bu_{i}, d(j) \}$ are independent for $i > j$.
\end{assumption}

\begin{assumption}[Transient supervisor separation principle]\label{A:TrSupervisor}

The supervising parameter varies slowly enough that, for~$m,n = 1,2$,
$\E [ \eta(i-1) e_{a,m}(i) e_{a,n}(i) ] = \E \eta(i-1) \E [ e_{a,m}(i) e_{a,n}(i) ]$.

\end{assumption}

\begin{assumption}[Jointly Gaussian \emph{a priori} errors]\label{A:TrGaussian}

The \emph{a priori} error $\{ e_{a,n}(i) \}$, $n = 1,2$, are zero-mean jointly Gaussian random variables, so that
\begin{align*}
	\E e_{a,n}^4(i) &= 3 \zeta_n^2(i)
	\\
	\E e_{a,1}^{k}(i) e_{a,2}^{\ell}(i) &= 
		\begin{cases}
			0 \text{,} & k + \ell = 3
			\\
			3 \zeta_1(i) \zeta_{12}(i) \text{,} & k = 3, \ell = 1
			\\
			3 \zeta_2(i) \zeta_{12}(i) \text{,} & k = 1, \ell = 3
			\\
			2 \zeta_{12}^2(i) + \zeta_1(i) \zeta_2(i) \text{,} & k = \ell = 2
		\end{cases}
\end{align*}

\end{assumption}

Assumption~A.\ref{A:TrData} is used to evaluate higher-order moments of the input signal. Although stronger than~A.\ref{A:SSData}, it is common in the transient analysis of standalone AFs~\cite{Sayed08a, Diniz13a}. Note that, although the vector sequence~$\{\bu_i\}$ is i.i.d., the elements within each vector~$\bu_i$ can be correlated. Additionally, assumption~A.\ref{A:TrSupervisor} allows us to decouple the analysis of the component filters and the supervisor, and A.\ref{A:TrGaussian} is used to evaluate moments of the \emph{a priori} errors for the supervisor variance recursion.

The following theorem collects the complete transient analysis from this section.

\begin{theorem}\label{T:transient}

Let~$\bR_u = \bU \bLambda \bU^T$ be the eigenvalue decomposition of the regressor covariance matrix. Also, let the component filters coefficient vectors and the supervising parameters be initialized as~$\bw_{n,-1} = \bzero$, $\abar(-1) = 0.5$, and~$\sigma_a^2(-1) = 0$, so that~$\Kbar_{n,-1} = \bU^T \bw^o (\bw^o)^T \bU$ and~$\Delta\Kbar_{n,-1} = \bzero$. Then, under~A.\ref{A:TrData}--A.\ref{A:TrGaussian}, the EMSE at iteration~$i$ can be computed recursively by the following steps
\begin{enumerate}[(i)]
	\item Evaluate the supervising parameter moments using
	\begin{align*}
		\E \eta^2(i-1) &\approx f\left[ \abar(i-1) \right]^2
			+ \sigma_a^2(i-1) f^{\prime}\left[ \abar(i-1) \right]^2
		\\
		\E \eta(i-1) &\approx f\left[ \abar(i-1) \right]
	\end{align*}

	\item Evaluate the global EMSE using~$\zeta_{n}(i) = \trace (\bLambda \Kbar_{n,i-1})$, $\Delta\zeta_{n}(i) = \trace (\bLambda \Delta \Kbar_{n,i-1})$, and
	\begin{align*}
		\zeta(i) &= \left[ \E \eta^2(i-1) \right] [ \Delta\zeta_{1}(i) + \Delta\zeta_{2}(i) ]
		\\
		{}&- 2 \left[ \E \eta(i-1) \right] \Delta\zeta_{2}(i) + \zeta_{2}(i)
	\end{align*}

	\item Update the error covariance matrices:
	$\displaystyle
		\begin{cases}
			\text{\eqref{E:FinalTrMSDFB1},}		& i = rL
			\\
			\text{\eqref{E:FinalTrMSDNoFB1},}	& i \neq rL
		\end{cases}
	$
	
	\item Update the supervisor statistics using
	\begin{align*}
		\bar{a}(i) &\approx \bar{a}(i-1) +
			\mu_a [ (1 - \bar{f}) \Delta\zeta_{2} - \bar{f} \Delta\zeta_{1} ]
			\bar{f}^{\prime}
		\\
		\sigma_a^2(i) &\approx
			\left[ 1 + 2 \mu_a G_1 + \mu_a^2 G_2 \right] \sigma_a^2(i-1)
				+ \mu_a^2 G_v
	\end{align*}
	for the~$G$ defined in~\eqref{E:SupervisorVar}.

\end{enumerate}
For~$m,n = 1,2$ and~$m \neq n$, the error covariance matrices are updated as
\begin{equation}\label{E:FinalTrMSDFB1}
\begin{aligned}
	\Kbar_{n,i} &= \Kbar_{i-1}
		- \mu_n [ \Kbar_{i-1} \bLambda + \bLambda \Kbar_{i-1} ]
		+ \mu_n^2 \bA
	\\
	\Delta \Kbar_{n,i} &= (\mu_m - \mu_n) \left( \Kbar_{i-1} \bLambda
		- \mu_n \bA \right)
\end{aligned}
\end{equation}
for~$i = rL$ and~$\bA = \sigma_v^2 \bLambda + \bLambda \trace (\Kbar_{i-1} \bLambda) + 2 \bLambda \Kbar_{i-1} \bLambda$ or
\begin{equation}\label{E:FinalTrMSDNoFB1}
\begin{aligned}
	\Kbar_{n,i} &= \Kbar_{n,i-1}
		+ \mu_n^2 \bA_n
	\\
	{}&- \mu_n [ \Kbar_{n,i-1} \bLambda + \bLambda \Kbar_{n,i-1} ]
	\\
	\Kbar_{12,i} &= \Kbar_{12,i-1}
		- \mu_1 \bLambda \Kbar_{12,i-1}
	\\
	{}&- \mu_2 \Kbar_{12,i-1} \bLambda
		+ \mu_1 \mu_2 \bA_{12}
		\text{.}
\end{aligned}
\end{equation}
for~$i \neq rL$, $\bA_n = \sigma_v^2 \bLambda + \bLambda \trace(\Kbar_{n,i-1} \bLambda) + 2 \bLambda \Kbar_{n,i-1} \bLambda$, and~$\bA_{12} = \sigma_v^2 \bLambda + 2 \bLambda \Kbar_{12,i-1} \bLambda + \trace(\bLambda \Kbar_{12,i-1}) \bLambda$.

\end{theorem}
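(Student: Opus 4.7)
The plan is to propagate covariance matrices of the component and global coefficient error vectors through two alternating update rules (one per branch of the cyclic feedback) and to derive matching recursions for the mean and variance of the supervisor's internal state via a local Taylor expansion of the activation function. Step~(i) then follows immediately: expanding $\eta(i-1) = f[a(i-1)]$ around $\bar{a}(i-1)$ to first order gives $\E \eta(i-1) \approx f[\bar{a}(i-1)]$ and $\E \eta^2(i-1) \approx f[\bar{a}(i-1)]^2 + \sigma_a^2(i-1) f^{\prime}[\bar{a}(i-1)]^2$ from the identifications $\bar{a}(i-1) = \E a(i-1)$, $\sigma_a^2(i-1) = \var a(i-1)$.

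For steps~(ii) and~(iii), I would start from the stationary LMS error dynamics $\tilw_{n,i} = (\bI - \mu_n \bu_i \bu_i^T) \tilw_{n,a} - \mu_n \bu_i v(i)$, with $\tilw_{n,a} = \bw^o - \bw_{n,a}$. Forming outer products and invoking A.\ref{A:NoiseIndependence} and A.\ref{A:TrData} together with the Gaussian fourth-moment identity $\E[\bu_i \bu_i^T \bM \bu_i \bu_i^T] = 2 \bR_u \bM \bR_u + \bR_u \trace(\bM \bR_u)$ yields a quadratic update for $\bK_{n,i} = \E \tilw_{n,i} \tilw_{n,i}^T$. When $i \neq rL$ the filters evolve autonomously with $\bw_{n,a} = \bw_{n,i-1}$, so I would also propagate the cross-covariance $\bK_{12,i}$; diagonalizing via $\bR_u = \bU \bLambda \bU^T$ and setting $\Kbar = \bU^T \bK \bU$ puts the recursions in the form~\eqref{E:FinalTrMSDNoFB1}. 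When $i = rL$, $\bw_{n,a} = \bw_{i-1}$ for every~$n$ and each filter inherits the same prior covariance $\bK_{i-1}$, collapsing the update into~\eqref{E:FinalTrMSDFB1}; a direct subtraction of the two resulting recursions delivers $\Delta \Kbar_{n,i} = \Kbar_{n,i} - \Kbar_{12,i} = (\mu_m - \mu_n)(\Kbar_{i-1}\bLambda - \mu_n \bA)$. The global EMSE formula then follows from $\zeta(i) = \trace(\bR_u \E \tilw_{i-1} \tilw_{i-1}^T)$ by substituting~\eqref{E:GlobalCoefError}, expanding the square, and applying A.\ref{A:TrSupervisor} to factor out $\E \eta$ and $\E \eta^2$; identifying $\Delta\zeta_n = \zeta_n - \zeta_{12} = \trace(\bLambda \Delta \Kbar_{n,i-1})$ puts it in the stated form.

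For step~(iv), I would substitute $e(i) = e_a(i) + v(i)$ from~\eqref{E:estimationAPrioriError}, $y_1(i) - y_2(i) = e_{a,2}(i) - e_{a,1}(i)$ from~\eqref{E:outputDiff}, and $e_a(i) = \eta(i-1) e_{a,1}(i) + [1-\eta(i-1)] e_{a,2}(i)$ from~\eqref{E:GlobalAPriori} into~\eqref{E:adaptA}, take expectation under A.\ref{A:NoiseIndependence} and A.\ref{A:TrSupervisor} with the Taylor replacement $\E f^{\prime}(a) \approx \bar{f}^{\prime}$, and obtain the stated mean recursion. For the variance, subtracting the mean update, squaring, and taking expectation introduces products such as $e_a^2(i)[e_{a,1}(i) - e_{a,2}(i)]^2 f^{\prime}(a)^2$; evaluating these with Isserlis' theorem under A.\ref{A:TrGaussian} and collecting the terms proportional to $\sigma_v^2$, $\sigma_a^2(i-1)$, and $\sigma_a^2(i-1)^2$ (the latter two arising from the first- and second-order Taylor corrections to $f^{\prime}$ around $\bar{a}$) yields the coefficients $G_v$, $G_1$, and $G_2$ in the stated variance update.

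The main technical obstacle I expect is the feedback instant $i = rL$ in part~(iii): since $\bw_{i-1}$ is a supervisor-weighted mixture with random weight $\eta(i-1)$, the covariance $\bK_{i-1}$ that drives the feedback update is itself a combination of $\bK_{n,i-1}$ and $\bK_{12,i-1}$, and A.\ref{A:TrSupervisor} must be applied consistently so that the recursion closes without generating higher-order supervisor moments. The Gaussian closure in the supervisor-variance update is the other delicate point, because several fourth-order products of \emph{a~priori} errors must be carried through simultaneously without dropping cross terms in $\zeta_{12}$; once A.\ref{A:TrGaussian} is invoked, however, the remaining algebra reduces to standard LMS energy-conservation bookkeeping.
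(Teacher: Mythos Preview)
Your proposal is correct and follows essentially the same route as the paper: first-order Taylor expansion of $f$ around $\bar a$ for the supervisor moments, LMS error-covariance propagation with the Gaussian fourth-moment identity after whitening by $\bU$, the two-branch update according to whether $i=rL$, and the mean/variance recursions for $a$ obtained by substituting \eqref{E:GlobalAPriori}--\eqref{E:estimationAPrioriError} into \eqref{E:adaptA} and closing with A.\ref{A:TrSupervisor}--A.\ref{A:TrGaussian}. One small slip: the paper's variance recursion is affine in $\sigma_a^2(i-1)$---no $\sigma_a^2(i-1)^2$ term survives because the linearization of each $F_\bullet$ is first order in $a-\bar a$, so squaring produces at most $\E[a-\bar a]^2=\sigma_a^2$; but this is bookkeeping, not a gap in your argument.
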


Although Theorem~\ref{T:transient} presents our results in full generality, it is more straightforward to interpret these results when the input signal is white.

\begin{corollary}\label{T:transientiid}

For white input signals, i.e., $\bR_u = \sigma_u^2 \bI$, the recursions~\eqref{E:FinalTrMSDFB1} and~\eqref{E:FinalTrMSDNoFB1} reduce to
\begin{equation}\label{E:TrEMSEFBiid}
\begin{aligned}
	\zeta_{n}(i) &= a_n \zeta(i-1) + \mu_n^2 M \sigma_u^4 \sigma_v^2
	\\
	\Delta\zeta_{n}(i) &=
		(\mu_m - \mu_n) \sigma_u^2 \left[ b_n \zeta(i-1) + \mu_n M \sigma_u^2 \sigma_v^2 \right]
\end{aligned}
\end{equation}
for~$i = rL$ and
\begin{equation}\label{E:TrEMSENoFBiid}
\begin{aligned}
	\zeta_n(i) &= a_n \zeta_n(i-1) + \mu_n^2 M \sigma_u^4 \sigma_v^2
	\\
	\Delta\zeta_n(i) &= [1 - \mu_n \sigma_u^2] \Delta\zeta_n(i-1)
	\\
		{}&- \sigma_u^2 b_n [ \mu_n \zeta_n(i-1) - \mu_m \zeta_{12}(i-1) ]
	\\
		{}&+ \mu_n ( \mu_n - \mu_m) M \sigma_u^4 \sigma_v^2
\end{aligned}
\end{equation}
for~$i \neq rL$, $a_n = 1 - 2 \mu_n \sigma_u^2 + \mu_n^2 ( M + 2 ) \sigma_u^4$, and~$b_n = 1 - \mu_n ( M + 2 ) \sigma_u^2$.

\end{corollary}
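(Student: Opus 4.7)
The proof specializes the matrix recursions in Theorem~\ref{T:transient} to $\bR_u = \sigma_u^2 \bI$, in which case $\bLambda = \sigma_u^2 \bI$ commutes with every matrix, $\bLambda \Kbar = \Kbar \bLambda = \sigma_u^2 \Kbar$, $\bLambda \Kbar \bLambda = \sigma_u^4 \Kbar$, and $\trace(\bLambda \Kbar) = \sigma_u^2 \trace(\Kbar)$. Under these identities the matrix
\[
\bA = \sigma_u^2 \sigma_v^2 \bI + \sigma_u^4 \trace(\Kbar_{i-1}) \bI + 2\sigma_u^4 \Kbar_{i-1}
\]
has $\trace(\bA) = M \sigma_u^2 \sigma_v^2 + (M+2)\sigma_u^4 \trace(\Kbar_{i-1})$, and entirely analogous expressions hold for $\bA_n$ and $\bA_{12}$. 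The plan is then to apply $\sigma_u^2 \trace(\cdot)$ to both sides of every equation in \eqref{E:FinalTrMSDFB1}--\eqref{E:FinalTrMSDNoFB1} and use the identifications $\zeta_n = \sigma_u^2 \trace(\Kbar_n)$, $\Delta\zeta_n = \sigma_u^2 \trace(\Delta\Kbar_n)$ from step~(ii) of Theorem~\ref{T:transient}, together with $\zeta_{12} = \sigma_u^2 \trace(\Kbar_{12})$ (immediate from $\zeta_{12} = \trace(\bR_u\,\E \tilw_1 \tilw_2^T)$ under A.\ref{A:TrData}), to collapse each matrix equation into a scalar recursion.

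For the feedback iteration $i = rL$, scalarizing the $\Kbar_{n,i}$ equation in \eqref{E:FinalTrMSDFB1} produces $\zeta_n(i) = [1 - 2\mu_n \sigma_u^2 + \mu_n^2 (M+2)\sigma_u^4]\,\zeta(i-1) + \mu_n^2 M \sigma_u^4 \sigma_v^2$, which is the first line of \eqref{E:TrEMSEFBiid} with $a_n = 1 - 2\mu_n \sigma_u^2 + \mu_n^2 (M+2)\sigma_u^4$. The same reduction applied to the $\Delta\Kbar_{n,i}$ equation factors out $(\mu_m - \mu_n)\sigma_u^2$ from the bracket and, after inserting $\trace(\bA)$, leaves the coefficient $b_n = 1 - \mu_n(M+2)\sigma_u^2$ multiplying $\zeta(i-1)$, producing the second line of \eqref{E:TrEMSEFBiid}.

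For the non-feedback case $i \neq rL$, the $\Kbar_{n,i}$ equation in \eqref{E:FinalTrMSDNoFB1} reduces identically to $\zeta_n(i) = a_n \zeta_n(i-1) + \mu_n^2 M \sigma_u^4 \sigma_v^2$. The $\Delta\zeta_n$ recursion is the only nontrivial piece and constitutes the main technical obstacle. I would first scalarize the $\Kbar_{12,i}$ equation in \eqref{E:FinalTrMSDNoFB1} to obtain a recursion for $\zeta_{12}(i)$, and then use the identity $\Delta\zeta_n = \zeta_n - \zeta_{12}$, which follows by matching the direct expansion $\zeta = \E\eta^2 (\zeta_1 + \zeta_2 - 2\zeta_{12}) + 2 \E\eta (\zeta_{12} - \zeta_2) + \zeta_2$ against the compact form for $\zeta(i)$ in step~(ii) of Theorem~\ref{T:transient}. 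Subtracting the two scalar recursions gives a coefficient $a_n = 1 - 2\mu_n\sigma_u^2 + \mu_n^2(M+2)\sigma_u^4$ on $\zeta_n(i-1)$ and $-[1 - (\mu_1+\mu_2)\sigma_u^2 + \mu_1\mu_2(M+2)\sigma_u^4]$ on $\zeta_{12}(i-1)$; rewriting these as $(1 - \mu_n\sigma_u^2) - \sigma_u^2 b_n \mu_n$ and $-(1 - \mu_n\sigma_u^2) + \sigma_u^2 b_n \mu_m$ respectively exhibits the factor $1 - \mu_n \sigma_u^2$ multiplying $\Delta\zeta_n(i-1) = \zeta_n(i-1) - \zeta_{12}(i-1)$ and the residual $-\sigma_u^2 b_n [\mu_n \zeta_n(i-1) - \mu_m \zeta_{12}(i-1)]$, while the noise terms combine as $\mu_n^2 - \mu_1 \mu_2 = \mu_n(\mu_n - \mu_m)$, yielding \eqref{E:TrEMSENoFBiid}. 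This regrouping is purely algebraic but is the only place where the structure of $b_n$ is not immediately visible from the matrix equations.
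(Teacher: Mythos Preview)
Your proposal is correct and follows the natural route: the paper does not give an explicit proof of Corollary~\ref{T:transientiid}, treating it as an immediate specialization of Theorem~\ref{T:transient}, and your approach of setting $\bLambda = \sigma_u^2 \bI$ and applying $\trace(\bLambda\,\cdot)$ to each matrix recursion is exactly how that specialization is carried out. The algebraic regrouping you outline for the $\Delta\zeta_n$ recursion in the non-feedback case is right, and note that the identity $\Delta\zeta_n(i) = \zeta_n(i) - \zeta_{12}(i)$ is stated directly after~\eqref{E:TrGlobalEMSE}, so you need not rederive it by matching expansions.
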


Theorem~\ref{T:transient} and Corollary~\ref{T:transientiid} show the advantages of cyclic coefficients feedback from the viewpoint of convergence. Firstly, notice from~\eqref{E:TrEMSEFBiid} that the EMSE of both component filters are functions of the global EMSE upon feedback. As such, convergence stagnation is eradicated since the difference between the component errors cannot become too large for moderately sized cycle periods. Indeed, upon feedback, the difference between the~$\zeta_n(i)$ is proportional to the difference between the component filters step sizes. Secondly, coefficients feedback reduces the EMSE/cross-EMSE gap, whose magnitude becomes proportional to~$\abs{\mu_1 - \mu_2}$. As the supervisor analysis suggests, this reduces the supervising parameter variance in~\eqref{E:SupervisorVar}, increasing the stability of the supervisor~(see Section~\ref{S:Sims}). Finally, the expression for the mean supervisor parameters~$\abar$ in step~(iv) reveals why feeding back coefficients at all iterations~($L = 1$) stalls the supervisor adaptation and justifies the use of \emph{cyclic} feedback. This way, the combination can reduce the supervising parameters variance as well as avoid convergence stagnation without hindering the supervisor adaptation.

Before proceeding, note that the supervisor transient analysis given in steps~(i) and~(iv) holds for arbitrary activation functions. Transient models for the convex and affine supervisors from Section~\ref{S:SupervisingRules} can therefore be obtained for appropriate choices of~$f$. For the convex supervisor, using~$f$ as in~\eqref{E:ConvexSupervisor_f} recovers the results from~\cite{Vitor09t}. However, for the affine supervisor, i.e., for~$f(a) = a$, the results in Theorem~\ref{T:transient} differ from the previous literature. Indeed, \cite{Candido08a, Candido10t} use different approximations and~\cite{Kozat11t} relies on a different scheme that does not explicitly evaluate these quantities.

%\red{Who does better? I don't think it's our place to say. If you remember, there is an algebraic error in the variance recursion for the affine supervisor in~\cite{Candido08a, Candido10t}. Even though it took me 3 months to figure that out, I do not feel like pointing it out.}

The remainder of this section derives the results from Theorem~\ref{T:transient}, namely the global EMSE expression~\eqref{E:TrGlobalEMSE}~(Section~\ref{S:TrComplete}) and the component filters and supervisor statistics needed to evaluate it~(Sections~\ref{S:TrComponents} and~\ref{S:TrSupervisor}).

\subsection{Global transient analysis}
	\label{S:TrComplete}

As opposed to the steady-state analysis of Section~\ref{S:SSGlobal}, the dynamic behavior of the topology is captured by the transient analysis. Hence, we only need to relate the local EMSEs of the component filters to the global EMSE of the combination. To do so, consider the \emph{a priori} errors relation in~\eqref{E:GlobalAPriori}. Under A.\ref{A:TrSupervisor}, its mean-square value is given by
\begin{equation}\label{E:TrGlobalEMSE}
\begin{aligned}
	\zeta(i) &= \left[ \E \eta^2(i-1) \right] [ \Delta\zeta_{1}(i) + \Delta\zeta_{2}(i) ]
	\\
	{}&- 2 \left[ \E \eta(i-1) \right] \Delta\zeta_{2}(i) + \zeta_{2}(i)
		\text{,}
\end{aligned}
\end{equation}
where~$\Delta\zeta_n(i) = \zeta_n(i) - \zeta_{12}(i)$ for~$n = 1,2$. We immediately obtain step~(ii) of Theorem~\ref{T:transient}. In the sequel, we derive recursions for the component filters EMSEs/cross-EMSEs required to evaluate~\eqref{E:TrGlobalEMSE}.

\subsection{Component filters transient analysis}
	\label{S:TrComponents}

Start by recalling that~$\{\bu_i,\tilw_{i-1}\}$ are independent under A.\ref{A:TrData}, so that
\begin{equation}\label{E:TrEMSE}
\begin{aligned}
	\zeta_{n}(i) = \E \bu_i \tilw_{n,i-1} \tilw_{n,i-1}^T \bu_i^T = \trace (\bR_u \bK_{n,i-1})
	\\
	\zeta_{12}(i) = \E \bu_i \tilw_{1,i-1} \tilw_{2,i-1}^T \bu_i^T = \trace (\bR_u \bK_{12,i-1})
\end{aligned}
\end{equation}
where~$\bK_{n,i} = \E \tilw_{m,i}^{} \tilw_{n,i}^T$ and~$\bK_{12,i} = \E \tilw_{1,i}^{} \tilw_{2,i}^T$ are the covariance matrices of the coefficient error vectors. From the linearity of the trace, it is ready that~$\Delta\zeta_n(i) = \trace (\bR_u \Delta \bK_{n,i-1})$ with~$\Delta \bK_{n,i} = \bK_{n,i} - \bK_{12,i}$. Suffices then to find recursions for these matrices.

To do so, subtract the LMS filter recursion~\eqref{E:LMS} from~$\bw^o$ to get
\begin{equation}\label{E:Trtilw}
	\tilw_{n,i} = \tilw_{n,a} - \mu_n \bu_{i}
		\left[ \bu_i^{T} \tilw_{n,a} + v(i) \right]
		\text{,}
\end{equation}
where~$\tilw_{n,a} = \bw^o - \bw_{n,a}$. From the cyclic feedback definition~\eqref{E:Feedback}, we obtain that~$\bw_{n,a}$ is a function of~$\bw_{n,i-1}$ for~$n = 1,2$. Hence, it is independent of~$\bu_i$ under A.\ref{A:TrData}. The expected value of the outer product and cross-outer product of~\eqref{E:Trtilw} then yields
\begin{equation}\label{E:PreComponentMSD}
\begin{aligned}
	\bK_{n,i} &= \bK_{n,a}
		- \mu_n \bK_{n,a} \bR_u
		- \mu_n \bR_u \bK_{n,a}^T
	\\
	{}&+ \mu_n^2
		\left[ \E \bu_i \bu_i^T \bK_{n,a} \bu_i \bu_i^T
			+ \sigma_v^2 \bR_u \right]
	\\
	\bK_{12,i} &= \bK_{12,a}
		- \mu_2 \bK_{12,a} \bR_u
		- \mu_1 \bR_u \bK_{12,a}^T
	\\
	{}&+ \mu_1 \mu_2
		\left[ \E \bu_i \bu_i^T \bK_{12,a} \bu_i \bu_i^T
			+ \sigma_v^2 \bR_u \right]
\end{aligned}
\end{equation}
where~$\bK_{n,a} = \E \tilw_{n,a}^{} \tilw_{n,a}^T$ and~~$\bK_{12,a} = \E \tilw_{1,a}^{} \tilw_{2,a}^T$. Proceeding in a similar fashion, we obtain from~\eqref{E:Feedback} that
\begin{equation}\label{E:PrePriorMSD}
\begin{aligned}
	\bK_{n,a} &= \delta(i - rL) \bK_{i-1} +
		\left[ 1 - \delta(i - rL) \right] \bK_{n,i-1}
	\\
	\bK_{12,a} &= \delta(i - rL) \bK_{i-1} +
		\left[ 1 - \delta(i - rL) \right] \bK_{12,i-1}
\end{aligned}
\end{equation}

The only thing left to evaluate in~\eqref{E:PreComponentMSD} is the fourth-order moment of the regressor. To obtain a closed-form expression for these statistics, we first whiten the regressor vector using the eigenvalue decomposition~$\bR_u = \bU \bLambda \bU^T$. Applying the similarity transformation~$\bU$ to~\eqref{E:PreComponentMSD} and~\eqref{E:PrePriorMSD} then gives
\begin{equation}\label{E:ComponentMSD1}
\begin{aligned}
	\Kbar_{n,i} &= \Kbar_{n,a}
		- \mu_n \Kbar_{n,a} \bLambda
		- \mu_n \bLambda \Kbar_{n,a}^T
	\\
	{}&+ \mu_n^2
		\left[ \E \ubar_i \ubar_i^T \Kbar_{n,a} \ubar_i \ubar_i^T
			+ \sigma_v^2 \bLambda \right]
	\\
	\Kbar_{12,i} &= \Kbar_{12,a}
		- \mu_2 \Kbar_{12,a} \bLambda
		- \mu_1 \bLambda \Kbar_{12,a}^T
	\\
	{}&+ \mu_1 \mu_2
		\left[ \E \ubar_i \ubar_i^T \Kbar_{12,a} \ubar_i \ubar_i^T
			+ \sigma_v^2 \bLambda \right]
\end{aligned}
\end{equation}
\begin{equation}\label{E:PriorMSD}
\begin{aligned}
	\Kbar_{n,a} &= \delta(i - rL) \Kbar_{i-1} +
		\left[ 1 - \delta(i - rL) \right] \Kbar_{n,i-1}
	\\
	\Kbar_{12,a} &= \delta(i - rL) \Kbar_{i-1} +
		\left[ 1 - \delta(i - rL) \right] \Kbar_{12,i-1}
\end{aligned}
\end{equation}
where~$\ubar_i^T = \bu_i^T \bU$ is the whitened version of~$\bu_i$ and for any matrix~$\bH$: $\bm{{\hat{\bH}}} = \bU^T \bH \bU$. Given that the trace is invariant to similarity transformations, the EMSE and cross-EMSE are recovered using
\begin{equation}\label{E:TrEMSECorr}
\begin{aligned}
	\zeta_{mn}(i) &= \trace (\bLambda \Kbar_{mn,i-1})
	\\
	\Delta\zeta_{n}(i) &= \trace (\bLambda \Delta \Kbar_{n,i-1})
%	\\
%	\zeta(i) &= \trace (\bLambda \Kbar_{i-1})
\end{aligned}
\end{equation}
Since~$\ubar_i$ is uncorrelated, i.e., $\E \ubar_i^T \ubar_i = \bLambda$, we can use the fourth-order relation from~\cite[Lemma~A.2]{Sayed08a} to write~\eqref{E:ComponentMSD1} as
\begin{equation}\label{E:ComponentMSD}
\begin{aligned}
	\Kbar_{n,i} &= \Kbar_{n,a}
		- \mu_n \Kbar_{n,a} \bLambda
		- \mu_n \bLambda \Kbar_{n,a}^T
	\\
		{}&+ \mu_n^2 \left[ \bLambda \trace (\Kbar_{n,a} \bLambda) + 2 \bLambda \Kbar_{n,a} \bLambda + \sigma_v^2 \bLambda \right]
	\\
	\Kbar_{12,i} &= \Kbar_{12,a}
		- \mu_2 \Kbar_{12,a} \bLambda
		- \mu_1 \bLambda \Kbar_{12,a}^T
	\\
		{}&+ \mu_1 \mu_2
			\left[ \bLambda \trace (\Kbar_{12,a} \bLambda) + 2 \bLambda \Kbar_{12,a} \bLambda + \sigma_v^2 \bLambda \right]
\end{aligned}
\end{equation}

The results in step~(iii) of Theorem~\ref{T:transient} are obtained from~\eqref{E:ComponentMSD} by either using~\eqref{E:PriorMSD} with~$i = rL$, yielding~\eqref{E:FinalTrMSDFB1}, or with~$i \neq rL$, yielding~\eqref{E:FinalTrMSDNoFB1}. We are now only missing recursions for the supervisor statistics to complete our analysis.

\subsection{Supervisor transient analysis}
	\label{S:TrSupervisor}

To carry out the supervisor analysis for general activation functions, we use a linearization argument similar to~\cite{Vitor09t}. First, we approximate the activation function~$f$ in~\eqref{E:SupervisorModel} by its first order Taylor expansion around the mean auxiliary parameter~$\abar(i) = \E a(i)$ as in
\begin{equation}\label{E:fTaylor}
	\eta(i) = f[a(i)] \approx f[\abar(i)]
		+ f^\prime[\abar(i)] \cdot \left[ a(i) - \abar (i) \right]
		\text{.}
\end{equation}
Once again, we write~$f^\prime$ for the derivative of~$f$. Note that the expected value of the second term of~\eqref{E:fTaylor} is zero. Hence, we can evaluate the moments of~$\eta$ in~\eqref{E:TrGlobalEMSE} as
\begin{equation}\label{E:SupervisorMoments}
\begin{aligned}
	\E \eta^2(i) &\approx f^2[\abar(i)]
		+ \sigma_a^2(i) {f^{\prime}}^2[\abar(i)]
	\\
	\E \eta(i) &\approx f[\abar(i)]
		\text{,}
\end{aligned}
\end{equation}
where~$\sigma_a^2(i) = \E \left[ a(i) - \abar(i) \right]^2$ denotes the variance of the auxiliary parameter~$a$. The expressions in~\eqref{E:SupervisorMoments} give step~(i) of Theorem~\ref{T:transient}. Suffice now to find recursions for~$\abar(i)$ and~$\sigma_a^2(i)$.

To do so, we write the supervisor model~\eqref{E:adaptA} in terms of the component filters \emph{a priori} errors. Explicitly, using the \emph{a priori} error relations~\eqref{E:GlobalAPriori}--\eqref{E:estimationAPrioriError}, we obtain
\begin{multline}\label{E:SupervisorModel2}
	a(i) = a(i-1) + \mu_{a} \left\{
		[ 1 - f_{i-1} ] f_{i-1}^{\prime} e_{a,2}^2(i)
	\vphantom{\sum}\right.\\\left.
	{}+ [ 2 f_{i-1} - 1 ] f_{i-1}^{\prime} e_{a,1}(i) e_{a,2}(i)
		- f_{i-1} f_{i-1}^{\prime} e_{a,1}^2(i)
	\right.\\\left.\vphantom{\sum}
	{}+ f_{i-1}^{\prime} [ e_{a,2}(i) - e_{a,1}(i) ] v(i) \right\} 
		\text{,}
\end{multline}
where we write~$f_{i-1} = f[a(i-1)]$ and~$f_{i-1}^\prime = f^\prime[a(i-1)]$ for conciseness. We express~$\eta(i-1)$ as~$f[a(i-1)]$ in~\eqref{E:SupervisorModel2} to stress its dependence on the activation function. We can then use the linearization~\eqref{E:fTaylor} to obtain
\begin{equation}\label{E:ATaylor}
\begin{aligned}
	a(i) &\approx a(i-1) + \mu_{a} \left[ F_2 e_{a,2}^2(i)
		+ F_{12} e_{a,1}(i) e_{a,2}(i)
	\vphantom{\sum}\right.
	\\
	&\left.\vphantom{\sum}
	{}- F_1 e_{a,1}^2(i) + F_v [ e_{a,2}(i) - e_{a,1}(i) ] v(i)
	\right]
\end{aligned}
\end{equation}
with
\begin{align*}
	F_1 &\approx \fbar \fpbar
		+ \left[ (\fpbar)^{2} + \fbar \fppbar \right]
	\left[ a(i-1) - \abar \right]
	\\
	F_2 &\approx (1-\fbar) \fpbar
		+ \left[ -(\fpbar)^{2} + (1-\fbar) \fppbar \right]
	\left[ a(i-1) - \abar \right]
	\\
	F_{12} &\approx (2\fbar - 1) \fpbar
		+ \left[ 2 (\fpbar)^{2} + (2\fbar - 1) \fppbar \right]
	\left[ a(i-1) - \abar \right]
	\\
	F_v &\approx \fpbar
		+ \fppbar \left[ a(i-1) - \abar \right]
\end{align*}
where~$\fbar = f(\abar)$, $\fpbar = f^\prime(\abar)$, and~$\fppbar = f^{\prime\prime}(\abar)$, for~$f^{\prime\prime}$ denoting the second derivative of~$f$. The index on all~$\abar(i-1)$ was omitted for clarity.

Using A.\ref{A:TrSupervisor}, we can separate the expected value of the \emph{a priori} errors and the~$F$ in~\eqref{E:ATaylor}. Then, by noticing that~$\E [ a(i-1) - \abar ] = 0$ we obtain
\begin{equation}\label{E:SupervisorMean}
	\abar(i) \approx \abar(i-1)
		+ \mu_a \left[ (1 - \fbar) \Delta\zeta_{2}
			- \fbar \Delta\zeta_{1} \right] \fpbar
		\text{.}
\end{equation}
The recursion for the variance of~$a$ also requires~A.\ref{A:TrGaussian} to evaluate the higher-order moments of the \emph{a priori} errors that appear when taking the expected value of the square of~\eqref{E:ATaylor}. Then, algebraic manipulations give
\begin{equation}\label{E:SupervisorVar}
	\sigma_a^2(i) \approx [ 1 + 2 \mu_a G_1 + \mu_a^2 G_2 ]
		\sigma_a^2(i-1) + \mu_a^2 G_v
\end{equation}
where
\begin{align*}
	G_{1} &= [ (1 - \fbar) \Delta\zeta_{2} - \fbar \Delta\zeta_{1} ]
		\fppbar
	- [ \Delta\zeta_{1} + \Delta\zeta_{2} ] ( \fpbar )^2
	\\
	G_{2} &= 3 ( \fpbar )^4 ( \Delta\zeta_1 + \Delta\zeta_2 )^2
	+ ( \fppbar )^2
		[ \zeta_2 ( \Delta\zeta_1 + \Delta\zeta_2 ) + 2 \Delta\zeta_2^2 ]
	\\
	{}&+ 3 \fbar ( \fppbar )^2 ( \Delta\zeta_1 + \Delta\zeta_2 )
		[ \fbar \Delta\zeta_1 - (2 - \fbar) \Delta\zeta_2 ]
	\\
	{}&+ 6 ( \fpbar )^2 \fppbar ( \Delta\zeta_1 + \Delta\zeta_2 )
		[ \fbar \Delta\zeta_1 - (1 - \fbar) \Delta\zeta_2 ]
	\\
	{}&+ ( \fppbar )^2 ( \Delta\zeta_1 + \Delta\zeta_2 ) \sigma_v^2
	\\
	G_{v} &= ( \fpbar )^2
	\left\{
		\Delta\zeta_2^2 + ( \Delta\zeta_1 + \Delta\zeta_2 ) \times{}
	\vphantom{\sum}\right.
	\\
	&\left.\qquad\qquad\vphantom{\sum}
	\left[
		2 \fbar^2 ( \Delta\zeta_1 + \Delta\zeta_2 )
		- 4 \fbar \Delta\zeta_2 + \zeta_2 + \sigma_v^2
	\right] \right\} 
\end{align*}
Step~(iv) of Theorem~\ref{T:transient} is given by~\eqref{E:SupervisorMean} and~\eqref{E:SupervisorVar}. This completes the proof of Theorem~\ref{T:transient}.

\begin{figure}[t]
	\centering
	\includesvg{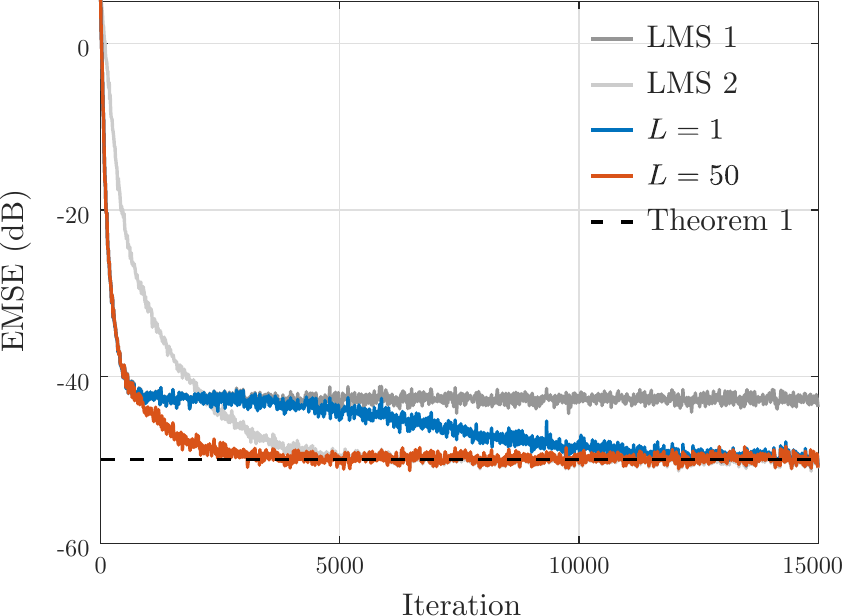}
	\caption{Steady-state analysis of a combination of LMS filters with coefficients feedback. \textbf{Correlated stationary scenario}~(see Section~\ref{S:Sims}): $M = 10$, $\sigma_u^2 = 1$, $\sigma_v^2 = 10^{-3}$,
	$\gamma = 0.7$, $\mu_1 = 0.01$, $\mu_2 = 0.002$;
	\textbf{Cyclic coefficients feedback}: $L = 1$,
		$\tilde{\mu} = 30$, $\beta = 0.9$,
		and~$\eps = 10^{-2}$~(normalized convex supervisor);
	\bm{$L = 50$}: $\tilde{\mu} = 10$, $\beta = 0.9$, $\eps = 10^{-3}$,
		(normalized convex supervisor).}
		\label{F:2LMSStat}
\end{figure}

\begin{figure}[t]
	\centering
	\includesvg{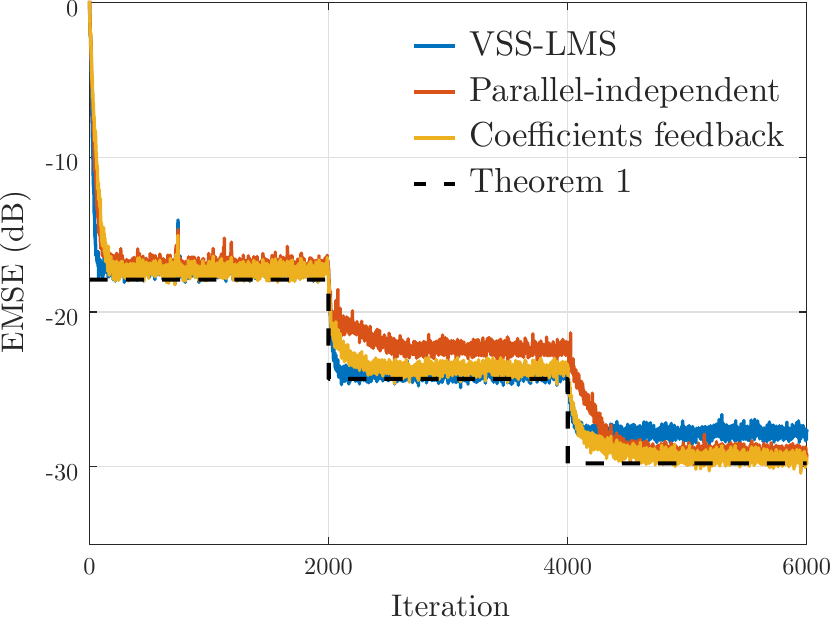}
	\caption{Tracking analysis of combinations with coefficients feedback.
		\textbf{White nonstationary scenario}~(see Section~\ref{S:Sims}):
		$M = 10$, $\sigma_u^2 = 1$, $\sigma_v^2 = 10^{-2}$,
		$\mu_1 = 0.08$, and $\mu_2 = 0.005$;
		$\sigma_q^2 = 10^{-4}$ until~$i = 2000$,
		then~$\sigma_q^2 \to 10^{-5}$ until~$i = 4000$,
		then~$\sigma_q^2 \to 10^{-6}$.
		\textbf{Parallel-independent}~($L \to \infty$):
			$\tilde{\mu} = 0.5$, $\beta = 0.9$, and
			$\eps = 10^{-2}$~(normalized convex supervisor);
		\textbf{Cyclic coefficients feedback}: $L = 10$,
			$\tilde{\mu} = 0.7$, $\beta = 0.7$,
			and~$\eps = 10^{-2}$~(normalized convex supervisor);
		\textbf{VSS-LMS}~\cite{Kwong92v}:
			$\mu(i) = 0.95 \cdot \mu(i-1) + 10^{-1} \cdot e(i)^2$,
			$\mu_{\text{max}} = \mu_1$, and $\mu_{\text{min}} = \mu_2$.}
		\label{F:2LMSNonStat}
\end{figure}

\section{Simulations}
	\label{S:Sims}

The numerical examples in this section illustrate both the performance of combinations with coefficients feedback and the theoretical results from the previous sections. They follow the data model introduced in Section~\ref{S:SSTracking}, taking the regressor~$\bu_i$ to be a delay line that captures samples~$u(i)$ from a zero-mean Gaussian process with~$\sigma_u^2 = \E u(i)^2$. For simulations with \emph{white input data}, the~$u(i)$ are taken to be zero-mean Gaussian i.i.d{.} random variable. For simulations with \emph{correlated input data}, the~$u(i)$ follow the first-order autoregressive model
\begin{equation}
	u(i) = \gamma u(i-1) + \sqrt{1 - \gamma^2} \, x(i)
		\text{,}
\end{equation}
where~$x(i)$ is a zero-mean Gaussian i.i.d{.} random variable with variance~$\sigma_u^2$ and~$0 < \gamma < 1$. Unless stated otherwise, all curves are ensemble averages of 300 independent realizations.

\begin{figure}[t]
	\centering
	\includesvg{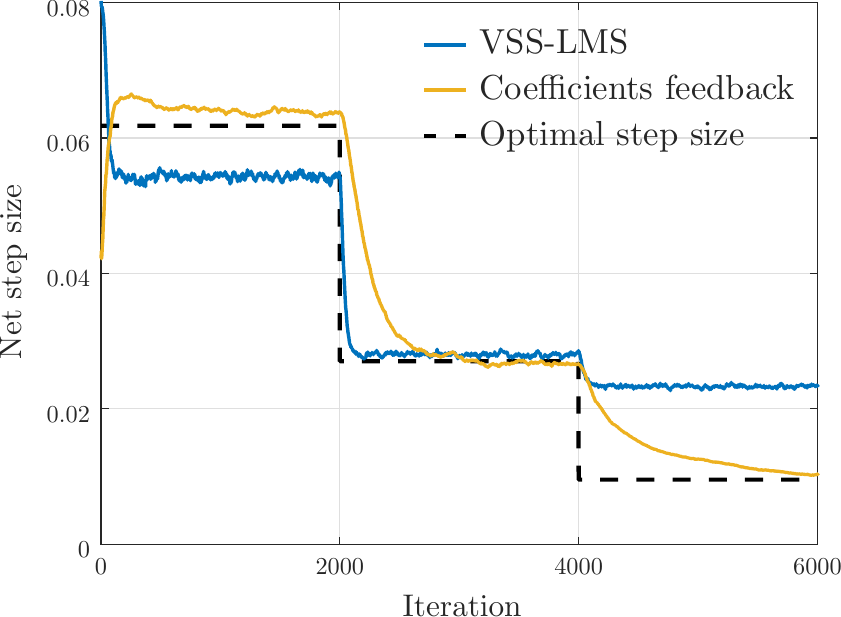}
	\caption{Tracking analysis of combinations with coefficients feedback: equivalent step size. Same setting as Fig.~\ref{F:2LMSNonStat}.}
		\label{F:2LMSNonStat2}
\end{figure}

\begin{figure*}[t]
	\centering
	\includesvg{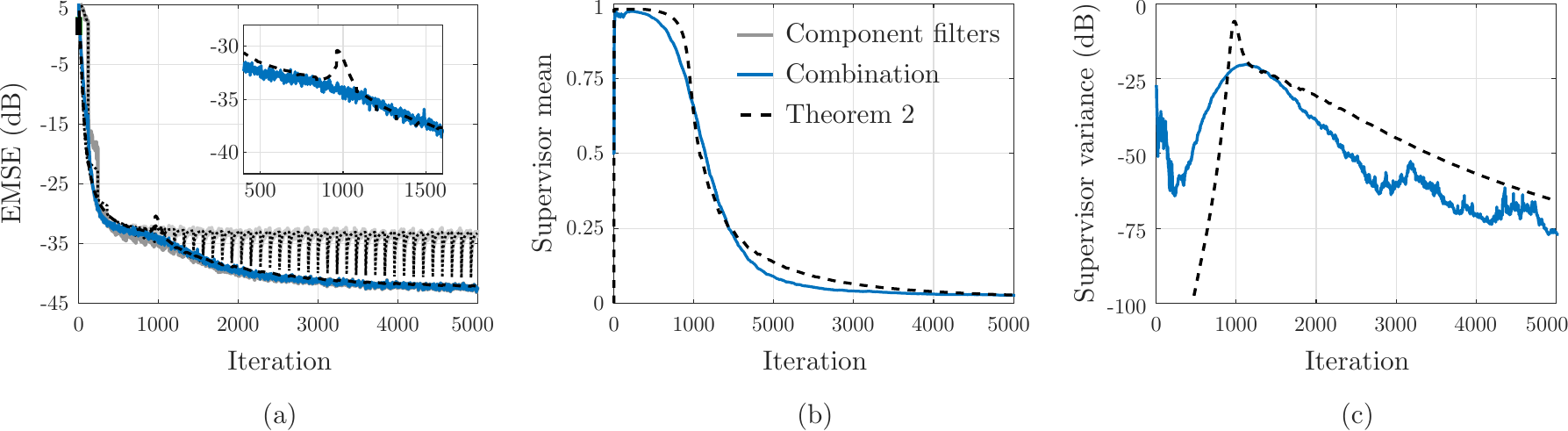}
	\caption{Transient analysis of a combination of LMS filters with coefficients feedback: convex supervisor.
	\textbf{Correlated stationary scenario}~(see Section~\ref{S:Sims}):
		$M = 10$, $\sigma_u^2 = 1$, $\sigma_v^2 = 10^{-2}$,
		$\gamma = 0.7$, $\mu_1 = 0.05$, $\mu_2 = 0.005$,
		$L = 100$, and~$\mu_a = 250$~(convex supervisor).
		Ensemble averages of~$1000$ realizations.}
		\label{F:Tr2LMSConvexSupervisor}
\end{figure*}

\begin{figure*}[t]
	\centering
	\includesvg{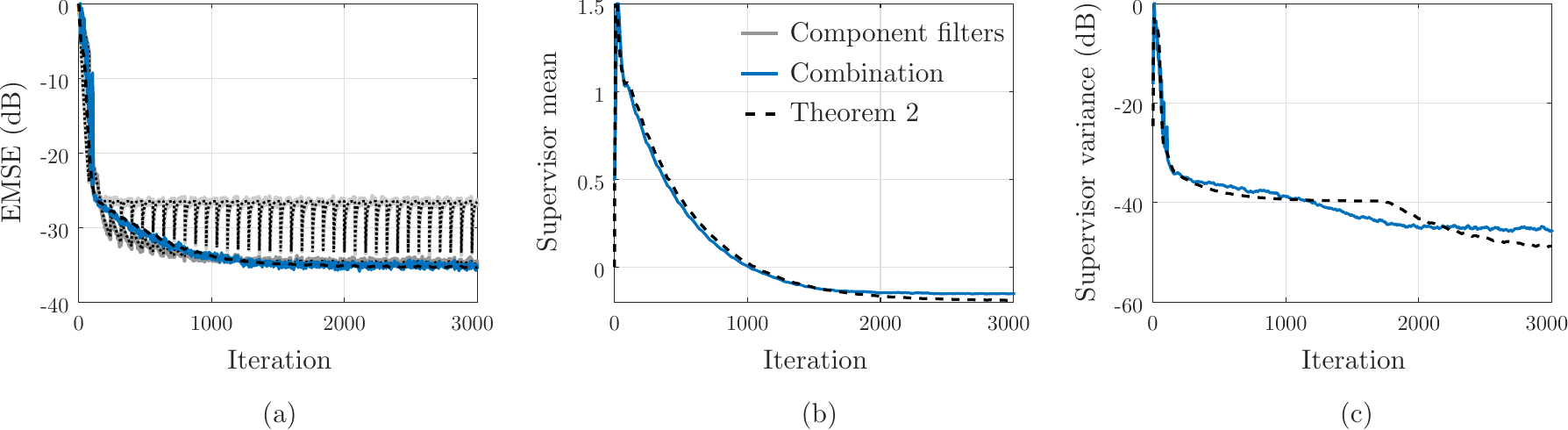}
	\caption{Transient analysis of a combination of LMS filters with coefficients feedback: affine supervisor.
	\textbf{White stationary scenario}~(see Section~\ref{S:Sims}):
		$M = 7$, $\sigma_u^2 = 1$, $\sigma_v^2 = 10^{-2}$,
		$\mu_1 = 0.05$, $\mu_2 = 0.01$, $L = 80$,
		$\mu_{\eta} = 2$, and~$\eta \geq -0.25$.
		Ensemble averages of~$1000$ realizations.}
		\label{F:Tr2LMSAffineSupervisor}
\end{figure*}

\subsection{Steady-state and tracking performance}
\label{S:SimsSSTracking}

For a stationary system, i.e., for~$\bq_i = 0$ in~\eqref{E:RandomWalkModel}, Fig.~\ref{F:2LMSStat} compares the steady-state results from Theorem~\eqref{T:SS} to numerical simulations using a convex supervisor. Given that the supervising parameter is constrained to~$[0,1]$, the performance models yield accurate predictions. As discussed in Section~\ref{S:SSGlobal}, if~$\eta$ is unconstrained, coefficients feedback drives the net step size close to zero as in Fig.~\ref{F:StatAffine2}. Since Theorem~\ref{T:SS} relies on the assumption that the supervisor has small variance, its estimate of~$\etabar$ is no longer reliable. Nevertheless, \eqref{E:2LMSFB} can still be used to estimate the steady-state EMSE for specific values of~$\etabar$. Moreover, the transient analysis from Theorem~\ref{T:transient} can be used in this case~(see Section~\ref{S:SimsTransient}).

To showcase the tracking capabilities of the coefficients feedback topology, let~$\bq_i$ in~\eqref{E:RandomWalkModel} be a zero-mean Gaussian random variable with covariance~$\bQ = \sigma_q^2 \bI$. Recall from Theorem~\ref{T:SS} that for small~$L$ its steady-state error is equivalent to that of a standalone LMS filter with step size~$\mubar$, an affine/convex combination of the component filters step sizes. For illustration, we therefore compare its performance to the VSS-LMS filter from~\cite{Kwong92v} in Fig.~\ref{F:2LMSNonStat}. The VSS-LMS parameters were designed to give good performance in the first, most stringent scenario~($\sigma_q^2 = 10^{-4}$). We also include the results for a parallel-independent combination.

Although the performance of both combinations and the VSS-LMS filter are comparable in the first scenario, the same does not hold as the nonstationarity level changes. In the two other scenarios~($\sigma_q^2 = 10^{-5}$ and~$\sigma_q^2 = 10^{-6}$), either the parallel-independent or the VSS-LMS has higher misadjustment. The parallel-independent topology performs well in the first and last scenarios because the step size of each component filter was chosen close to the optimal~$\mu^o$ in these cases~(see Fig.~\ref{F:2LMSNonStat2}). However, it performs worse when this is not the case. The VSS-LMS algorithm, although effective, is less robust than the normalized convex supervisor used by the combinations~\cite{Jeronimo16c}. Thus, its performance is sensitive to the choice of parameters and it cannot track the optimal step size in the last scenario~(Fig.~\ref{F:2LMSNonStat2}).

\subsection{Transient performance}
\label{S:SimsTransient}

Figs.~\ref{F:CyclicComponents}b and~\ref{F:2LMSStat} already showcased the effectiveness of coefficients feedback in addressing the convergence stagnation issue. Figs.~\ref{F:Tr2LMSConvexSupervisor} and~\ref{F:Tr2LMSAffineSupervisor} additionally illustrate the results from Theorem~\ref{T:transient} for both convex and affine supervisors. Overall, the transient analysis matches the simulations. Small deviations occur in the convex supervisor case when the combination switches between component filters~(see detail in Fig.~\ref{F:Tr2LMSConvexSupervisor}a). This is due to the first-order approximation used to derive the supervisor variance recursion in Section~\ref{S:TrSupervisor}. This phenomenon is not as apparent in the affine supervisor simulations because~\eqref{E:SupervisorMoments}--\eqref{E:ATaylor} are exact when the activation function is linear~(Fig.~\ref{F:Tr2LMSAffineSupervisor}a). Still, the supervisor analyses show good agreement and display the same trends as the simulations~(Figures~\ref{F:Tr2LMSConvexSupervisor}b--c and~\ref{F:Tr2LMSAffineSupervisor}b--c). Despite the cyclostationary behavior of the component filters due to coefficients feedback, notice that the mean supervising parameters converge to a constant steady-state regime. This observation was used to provide the asymptotic performance results from Theorem~\ref{T:SS} without relying on a full transient analysis.

\section{Conclusion}

This paper proposed a new structure for the combination of adaptive filters by making use of cyclic coefficients feedback. This novel topology is able to theoretically describe standalone VSS adaptive algorithms as instances~($L = 1$) of this combination. Steady-state, tracking, and transient analyses showed how cyclic coefficients feedback can improve performance of combinations by addressing the convergence stagnation issue, improving tracking misadjustment, and reducing the supervising parameter variance. Numerical simulations illustrated the good fit of the derived models and showed that existing parallel combinations can be effectively and efficiently improved using the techniques described in this work. This topology opens up new applications for combinations of AFs, such as complexity reduction~\cite{Chamon14t} and rescue techniques~\cite{Sayed08a}.

% ========== APPENDICES ==========
\appendices

\section{Proof of Proposition~\ref{T:VarCovar}}
	\label{AP:VarCovar}

\begin{proof}

From the cyclic feedback relation~\eqref{E:Feedback}, observe that~$L = 1 \Rightarrow \bw_{n,a} = \bw_{i-1}$ for all~$i$. Thus, the update of the LMS component filters~\eqref{E:LMS} becomes
\begin{equation}\label{E:AFModelFB}
	\bw_{n,i} = \bw_{i-1} + \mu_{n} \bu_i e(i)
		\text{,} \quad n = 1,2 \text{,}
\end{equation}
where we recall that~$e(i) = d(i) - \bu_i^{T} \bw_{i-1}$ is the global output estimation error. The coefficients error vector statistics in~\eqref{E:PreFB} are obtained by first subtracting~\eqref{E:AFModelFB} from~$\bw^o_{i}$ to get
\begin{equation*}
	\tilw_{n,i} = \bw^o_i - \bw_{i-1} - \mu_{n} \bu_i e(i)
		\text{,} \quad n = 1,2 \text{.}
\end{equation*}
Then, notice that the random walk system model from~\eqref{E:RandomWalkModel} implies that~$\bw^o_i - \bw_{i-1} = \tilw_{i-1} + \bq_i$. Hence, we can write
\begin{equation}\label{E:CoefErrorFB}
	\tilw_{n,i} = \tilw_{i-1} + \bq_i - \mu_{n} \bu_i e(i)
		\text{,} \quad n = 1,2 \text{.}
\end{equation}
Taking the expectation of the appropriate inner products of~\eqref{E:CoefErrorFB} results in
\begin{subequations}\label{E:PreFB1}
\begin{align}
	\E \norm{\tilw_{n,i}}^2 &= \E \norm{\tilw_{i-1}}^2 + \trace(\bQ)
	\notag\\
	{}&- 2 \mu_{n} \E e_a(i) e(i)
		+ \mu_{n}^2 \E \norm{\bu_i}^2 e(i)^2
	\\
	\E \tilw_{1,i}^T \tilw_{2,i} &= \E \norm{\tilw_{i-1}}^2 + \trace(\bQ)
	\notag\\
	{}&- (\mu_1 + \mu_2) \E e_a(i) e(i)
	\\\notag
	{}&+ \mu_1 \mu_2 \E \norm{\bu_i}^2  e(i)^2
		\text{.}
\end{align}
\end{subequations}
where we used the fact that~$\E \norm{\bq_i}^2 = \trace(\bQ)$ and that all terms linear in~$\bq_i$ vanish due to A.\ref{A:RandomWalk}.

Note that under~A.\ref{A:SSData}, it holds that~$\E \norm{\bu_i}^2 e(i)^2 = \trace(\bR_u) \E e(i)^2$ and that~A.\ref{A:NoiseIndependence} implies that~$\E e(i)^2 = \zeta(i) + \sigma_v^2$ and~$\E e_a(i) e(i) = \zeta(i)$. Using these relations,~\eqref{E:PreFB1} can written as~\eqref{E:PreFB}.
\end{proof}

\section{Proof of Proposition~\ref{T:etabar}}
	\label{AP:Eta}
	
\begin{proof}

We start by evaluating the global EMSE minimizer~$\eta^o$. Fix~$\eta$ in the \emph{a priori} error relation~\eqref{E:GlobalAPriori} to obtain
\begin{equation}\label{E:preEMSE1}
	\zeta(i) = \E \left[ \eta e_{1,a}(i) + (1-\eta) e_{2,a}(i) \right]^2
\end{equation}
by recalling that~$\zeta(i) = \E e_a(i)^2$. Since~$\eta$ is fixed, we can expand~\eqref{E:preEMSE1} to get, as $i \to \infty$,
\begin{equation}\label{E:preEMSE2}
	\zeta = \eta^2 \zeta_{1} + 2 \eta (1-\eta) \zeta_{12}
		+ (1-\eta)^2 \zeta_{2}
		\text{.}
\end{equation}
We can find~$\eta^o$ by setting the derivative of~\eqref{E:preEMSE2} to zero. Explicitly,
\begin{equation*}
	\frac{\del \zeta}{\del \eta} = 0 \Leftrightarrow
		\eta \Delta\zeta_{1} - (1-\eta) \Delta\zeta_{2}(i) = 0
		\text{,}
\end{equation*}
where~$\Delta\zeta_n = \zeta_1 - \zeta_{12}$. Hence,
\begin{equation}\label{E:etao}
	\eta^o = \frac{\Delta\zeta_2}{\Delta\zeta_1 + \Delta\zeta_2}
		\text{.}
\end{equation}

To show that~\eqref{E:etao} is also a fixed point of the mean supervisor update, take the expected value of~\eqref{E:adaptA} under the supervisor separation assumption~A.\ref{A:SSSupervisorSeparation} to get
\begin{equation}\label{E:Ea}
	\abar(i) = \abar(i-1) + \mu_{a} \E e(i) \left[ y_{1}(i) - y_{2}(i) \right]
		\E f^{\prime}[a(i-1)]
		\text{,}
\end{equation}
where~$\abar(i) = \E a(i)$. Any fixed point of~\eqref{E:Ea} is such that
\begin{equation*}
	\mu_{a} \E e(i) \left[ y_{1}(i) - y_{2}(i) \right] \E f^{\prime}[a(i-1)] = 0
\end{equation*}
and since~$\mu_a$ and~$f^\prime$ are strictly positive, this condition reduces to
\begin{equation}\label{E:fixed}
	\E e(i) \left[ y_{1}(i) - y_{2}(i) \right] = 0
		\text{.}
\end{equation}

From~\eqref{E:fixed}, we obtain an expression for~$\eta^\star$ by expanding the estimation error and the component filters outputs. Explicitly, fixing~$\eta^\star$ in~\eqref{E:GlobalAPriori}--\eqref{E:outputDiff}, we can then rewrite~\eqref{E:fixed} as
\begin{multline}\label{E:fixed2}
	\E \left[ \eta^\star \, e_{a,1}(i) +
		\left( 1-\eta^\star \right) e_{a,2}(i) + v(i) \right]
	\times{}
	\\
	\left[ e_{a,2}(i) - e_{a,1}(i) \right] = 0
		\text{.}
\end{multline}
From the data model assumption~A.\ref{A:NoiseIndependence}, all terms linear in~$v(i)$ vanish, so that as $i \to \infty$, \eqref{E:fixed2} becomes
\begin{equation*}
	(1 - \eta^\star) \zeta_2
	+ (2 \eta^\star - 1) \zeta_{12}
		- \eta^\star \zeta_1 = 0
		\text{,}
\end{equation*}
which can be rearranged as in~\eqref{E:etao}.
\end{proof}

% ========== ACKNOWLEDGEMENT ==========
\section*{Acknowledgment}
The authors would like to thank Dr.\ Wilder Bezerra Lopes, Prof.\ Vítor Heloiz Nascimento, and Prof.\ Magno T. M. Silva for fruitful discussions on this topic.

% ========== REFERENCES ==========
\bibliographystyle{IEEEtran}
\bibliography{IEEEabrv,af,afnet,afcomb,math,sp,telecom}

\end{document}